\documentclass[11pt, reqno]{amsart}
\usepackage{amsfonts,amsmath,amssymb,amsthm,epsfig,cite,graphicx,hyperref,
	color, esint,fancyhdr, enumerate, latexsym,amsrefs, makecell}

    \usepackage{hyperref}
    \usepackage{cleveref}
\usepackage{color}
\usepackage[mathscr]{eucal}
\usepackage{comment}

\numberwithin{equation}{section}

\theoremstyle{definition}
\newtheorem{definition}{Definition}[section]
\newtheorem{example}[definition]{Example}

\theoremstyle{remark}
\newtheorem{remark}[definition]{Remark}
\theoremstyle{plain}
\newtheorem{theorem}[definition]{Theorem}
\newtheorem{lemma}[definition]{Lemma}

\newtheorem{result}[definition]{Result}

\newtheorem{corollary}[definition]{Corollary}

\newcommand{\eps}{\varepsilon}

\newcommand{\al}{\alpha}

\newcommand{\natu}{\mathbb{N}}
\newcommand{\vphi}{\varphi}

\newcommand{\Om}{\Omega}

\newcommand{\bdy}{\partial}

\newcommand{\smoo}{\mathcal{C}}

\newcommand{\rl}{{\sf Re}}

\newcommand{\impl}{\Longrightarrow}

\newcommand{\dist}{{\rm dist}}

\newcommand{\norm}[1]{\left\lVert #1\right\rVert}
\newcommand{\XO}{\mathfrak X_{\mathcal O}}

\newcommand{\CC}{\mathbb{C}^2}
\newcommand{\cplx}{\mathbb{C}}

\newcommand{\rea}{\mathbb{R}}
\newcommand{\cn}{\mathbb{C}^n}

\begin{document}

	\title[Flow-invariant Runge domains and global linearization]{Flow invariant Runge domains and global linearization of holomorphic vector fields }
	\author{Sanjoy Chatterjee and Sushil Gorai}
	\address{Department of Mathematics and Statistics, Indian  Institute of Technology Kanpur}
	\email{ramvivsar@gmail.com}
	
	\address{Department of Mathematics and Statistics, Indian Institute of Science Education and Research Kolkata,
		Mohanpur -- 741 246}
	\email{sushil.gorai@iiserkol.ac.in, sushil.gorai@gmail.com}
	\keywords{ Runge domains, positive time flow invariant domains.}
	\subjclass[2020]{Primary: 32M25, 32M17, 32E30, 32H50}

	\date{\today}


	\begin{abstract}
		In this paper we study two problems concerning holomorphic flows on $\mathbb C^n$.
First, we prove Runge-type results for positive-time flow invariant
domains. For a linear flow $e^{tA}$, where $A\in GL(n,\mathbb C)$,
let $E^s$, $E^u$, and $E^c$ denote the stable, unstable, and center
subspaces of $A$, respectively. We show that if a positive-time flow
invariant domain $\Omega\subset\mathbb C^n$ contains the origin and the
center subspace, and if $E^u\oplus E^c$ has positive distance from
$\bdy\Omega$, then $\Omega$ is a Runge domain. We also discuss
additional classes and constructions of flow invariant Runge domains
arising from holomorphic dynamics.

Second, we investigate the global linearization of holomorphic vector
fields by automorphisms of \(\mathbb C^n\). We prove that a complete
holomorphic vector field
$V$ on $\cplx^n$ with a globally attracting fixed point, satisfying certain integrability condition can be globally linearized by an automorphism of $\cplx^n$. As a corollary we obtain the global linearization of vector fields of the form
        $V(z)=Az+O(\|z\|^m)$
        near $z= 0$, under certain
spectral-gap condition. The conjugating automorphism is obtained as the
limit of the family $e^{-tA}X_t$, where $X_t$ is the flow of $V$. Some examples are provided for illustration.
	\end{abstract}	

	\maketitle
	
	
	\section{Introduction}\label{Intro}
A {\em holomorphic vector field} $V$ on a domain  $\Om \subset \mathbb{C}^{n}$ is a real vector field on $\Om$  such that  $$V(z)=\sum_{j=1}^{n} a_{j}(z) \frac{\partial }{\partial  x_{j}}+b_{j}(z) \frac{\partial }{\partial  y_{j}}$$ such that $(a_j+ib_{j})$ is holomorphic function on $\Om$  for all $j \in \{1,2, \cdots ,n\}$.  We identify a holomorphic vector field on a domain $\Omega\subset\cplx^n$ with a holomorphic map
\[
  V=(V_1,\ldots,V_n):\Omega\to\cplx^n,
\]
whose flow $X_t$ solves
\[
  \frac{d}{dt}X_t(z)=V(X_t(z)),\qquad X_0(z)=z.
\]
We denote the space of holomorphic vector fields on $\cplx^n$ by $\XO(\cplx^n)$.  Unless explicitly stated otherwise, a complete holomorphic vector field is understood to have a flow defined for all complex times; its restriction to $t\in\rea$ is the real-time flow.  A domain $\Omega\subset\cplx^n$ is called \emph{positive-time flow invariant} with respect to $V$ if
$X_t(\Omega)\subset \Omega$ for every $t\geq 0.$

The first part of the paper concerns the Runge property of such domains.  Recall that a domain $\Omega\subset\cplx^n$ is Runge if every holomorphic function on $\Omega$ can be approximated, uniformly on compact subsets of $\Omega$, by holomorphic polynomials.  Runge domains play a fundamental role in several complex variables.  In one complex variable, the Runge property is characterized by the absence of relatively compact connected components of $\cplx\setminus\Omega$.  In higher dimensions no comparably simple topological characterization is available, and deciding whether a given domain is Runge is often subtle.  One of our aims is to identify new classes of Runge domains in higher dimensions that arise naturally from holomorphic dynamics.
We begin with linear dynamics.  Let $A\in GL(n,\cplx)$ and consider the linear flow $e^{tA}$.  The spectrum of $A$ gives the standard decomposition
\[
  \cplx^n=E^s\oplus E^u\oplus E^c
\]
into the stable, unstable, and center subspaces; see Section~\ref{sec-prelims} for the precise definitions.  Put
\[
  E^{cu}:=E^u\oplus E^c,
\]
and let
\[
  P^s:\cplx^n\to E^s,
  \qquad
  P^{cu}:\cplx^n\to E^{cu}
\]
be the projections associated with the splitting $\cplx^n=E^s\oplus E^{cu}$.  For a set $S\subset\cplx^n$, we use
\[
  \dist(S,\partial\Omega):=\inf\{\norm{x-y}:x\in S,\ y\in\partial\Omega\}.
\]

\begin{theorem}\label{T:orbitwise-thickness}
Let \(\Omega\subset\mathbb C^n\) be a positive-time flow invariant domain 
with
$E^{cu}\subset\Omega$.
Suppose that for every compact set \(K\subset\mathbb C^n\), there exists
\(T_K>0\) such that
\[
        \operatorname{dist}
        \bigl(e^{tA}P^{cu}(K),\partial\Omega\bigr)
        >
        \sup_{z\in K}\|e^{tA}P^s z\|
        \quad \text{for all } t\geq T_K .
        \tag{3.1}\label{E:orbitwise-thickness}
\]
Then \(\Omega\) is a Runge domain.
\end{theorem}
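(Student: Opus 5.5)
The plan is to use the characterization of Runge domains through polynomial hulls. A domain $\Omega$ is Runge as soon as, for every compact $K\subset\Omega$, the polynomial hull $\widehat K$ is again contained in $\Omega$. Then $\Omega$ is exhausted by an increasing sequence of polynomially convex compacta, namely hulls of an exhaustion. Oka--Weil gives polynomial approximation on each of them, and a diagonal argument finishes. So fix a compact $K\subset\Omega$ and put $L:=\widehat K$, which is compact in $\mathbb C^n$. The first observation is that polynomial hulls commute with invertible linear maps, since $p\circ e^{tA}$ is a polynomial whenever $p$ is. Hence $e^{tA}L=\widehat{e^{tA}K}$ for every $t$.

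Next apply the hypothesis \eqref{E:orbitwise-thickness} to the compact set $L$ rather than to $K$ (this is allowed, because the hypothesis is stated for every compact set of $\mathbb C^n$). For $w\in L$ and $t\ge T_L$ write $e^{tA}w=e^{tA}P^{cu}w+e^{tA}P^s w$. The point $e^{tA}P^{cu}w$ lies in $E^{cu}\subset\Omega$ by invariance of $E^{cu}$, and its distance to $\partial\Omega$ exceeds $\|e^{tA}P^s w\|$. The open ball of that radius around it is connected, meets $\Omega$ and misses $\partial\Omega$, so it lies in $\Omega$. Hence $e^{tA}L\subset\Omega$ for all $t\ge T_L$, and by positive-time invariance the same holds for all larger times.

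The main obstacle is pulling this back to $t=0$, that is, deducing $L\subset\Omega$ from $e^{tA}L\subset\Omega$. Forward invariance $e^{tA}\Omega\subset\Omega$ points the wrong way, so the natural route is a continuity argument in $t$. Let
\[
I:=\{s\in[0,T]:\ e^{sA}L\subset\Omega\},
\]
which contains $T$ and is open in $[0,T]$ because $L$ is compact and $\Omega$ is open. The aim is to show $I$ is closed, hence all of $[0,T]$. I would try to get this by combining three facts:
\begin{itemize}
\item $e^{sA}K\subset\Omega$ for all $s\ge0$, by invariance of $\Omega$;
\item $e^{sA}L$ is the hull of $e^{sA}K$;
\item a Kontinuit\"atssatz-type statement, which requires first showing that $\Omega$ is pseudoconvex.
\end{itemize}
For pseudoconvexity I would try to exhibit $\Omega$ as the increasing union $\bigcup_{t\ge0} e^{-tA}(\Omega\cap e^{tA}\Omega)$ of sets on which a plurisubharmonic exhaustion can be built from $-\log\operatorname{dist}(\cdot,\partial\Omega)$ along the flow.

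If that route stalls, the fallback is to replace $L$ by a compact set that the flow moves monotonically. One candidate is
\[
M:=\bigcup_{s\in[0,T]} e^{sA}K\cup e^{TA}L .
\]
One would show that $\widehat{M}$ is trapped between $M$ and $\Omega$, and then use $\widehat K\subset\widehat M$.
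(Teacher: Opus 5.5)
Your first step is correct and matches the paper's first step: you apply (3.1) to an arbitrary compact set and use the ball around $e^{tA}P^{cu}w$, which gives $e^{tA}L\subset\Omega$ for all large $t$. The gap is the target you then aim for. The claim that $\widehat K\subset\Omega$ for every compact $K\subset\Omega$ is false under the hypotheses of the theorem. So neither the closedness of $I$, nor the pseudoconvexity of $\Omega$, nor the fallback with $M$ can be proved.

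Here is a counterexample. Take $n=2$, $A=-I$, and
\[
\Omega=\{|z_1|<2,\ |z_2|<1\}\cup\{|z_1|<1,\ |z_2|<2\}.
\]
This domain is starlike about $0$, so $e^{-t}\Omega\subset\Omega$ for $t\ge 0$. Here $E^{cu}=\{0\}$ and $P^{cu}=0$, so (3.1) reduces to $\dist(0,\partial\Omega)>e^{-t}\sup_K\|z\|$, which holds for large $t$. Let
\[
K=\{(\zeta,1/\zeta): |\zeta|=2/3 \text{ or } |\zeta|=3/2\}\subset\Omega .
\]
Applying the maximum principle to $\zeta\mapsto p(\zeta,1/\zeta)$ on the annulus $2/3\le|\zeta|\le 3/2$ shows $(1,1)\in\widehat K$, yet $(1,1)\notin\Omega$. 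In fact $\Omega$ is a complete Reinhardt domain whose logarithmic image is not convex, so it is not even pseudoconvex. It is Runge only in the density sense of the paper's definition, and that is all the theorem claims.

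The missing idea is to pull back from large $t$ to $t=0$ by analytic continuation in complex time applied to a scalar function, not to the set $\widehat K$.
\begin{enumerate}
\item Fix a compact $L\subset\Omega$, $f\in\mathcal O(\Omega)$, and a complex measure $\mu$ on $L$ that annihilates all polynomials.
\item Set $S_L=\{\tau\in\cplx: e^{\tau A}L\subset\Omega\}$, which is open and contains $[0,\infty)$. Define $H(\tau)=\int_L f(e^{\tau A}z)\,d\mu(z)$, which is holomorphic on $S_L$.
\item Your first step, applied to a closed ball $\overline{B(0,R)}\supset L$, gives $e^{tA}\overline{B(0,R)}\subset\Omega$ for $t\ge T_R$.
\item Then $f\circ e^{tA}$ is holomorphic near that ball, hence a uniform limit of its Taylor polynomials on $L$. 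So $H(t)=0$ for $t\ge T_R$.
\item The identity theorem on the component of $S_L$ containing $[0,\infty)$ gives $\int_L f\,d\mu=H(0)=0$.
\end{enumerate}
By Hahn--Banach duality, $f$ is then a uniform limit of polynomials on $L$. No information about hulls or pseudoconvexity is needed, and, as the example shows, none is available.
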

\begin{remark}
The conclusion of Theorem~\ref{T:Runge} does not, in general, hold without the assumption \eqref{E:orbitwise-thickness}. A  counterexample is given in Section~\ref{sec-Runge}. 
\end{remark}

\begin{corollary}\label{T:Runge}
Let $\Om \subset \cn $ be a positive-time flow invariant domain with respect to $A$ and $0\in\Omega$. Suppose that \[
 E_{c} \subset \Om\quad \text{and}\quad \text{dist}(E_{u} \oplus E_{c},\partial \Om)>0.
 \]
 Then $\Om$ is a Runge domain. 
\end{corollary}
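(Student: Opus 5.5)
We obtain Corollary~\ref{T:Runge} directly from Theorem~\ref{T:orbitwise-thickness}. The plan is to check its two hypotheses: first that $E^{cu}\subset\Omega$, and second the orbitwise thickness condition \eqref{E:orbitwise-thickness}.

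\textbf{Step 1: $E^{cu}\subset\Omega$.} Set $\delta:=\dist(E^u\oplus E^c,\partial\Omega)>0$. Since $\Omega$ is open and $E^c\subset\Omega$, the set $E^{cu}\cap\Omega$ is nonempty, for instance it contains $0$. Because $\dist(E^{cu},\partial\Omega)>0$, the subspace $E^{cu}$ does not meet $\partial\Omega$. Hence $E^{cu}$ is the disjoint union of the two relatively open sets $E^{cu}\cap\Omega$ and $E^{cu}\setminus\overline{\Omega}$. As $E^{cu}$ is connected and the first set is nonempty, $E^{cu}\subset\Omega$. This argument only uses $0\in\Omega$, or $E^c\subset\Omega$, to guarantee nonemptiness.

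\textbf{Step 2: the condition \eqref{E:orbitwise-thickness}.} Fix a compact set $K$. The subspace $E^{cu}$ is $A$-invariant, so $e^{tA}P^{cu}(K)\subset E^{cu}$ for every $t$. Therefore
\[
\dist\bigl(e^{tA}P^{cu}(K),\partial\Omega\bigr)\ \ge\ \dist(E^{cu},\partial\Omega)=\delta
\]
for all $t\ge0$. On the other side, $E^s$ is the generalized eigenspace for eigenvalues with negative real part. The standard estimate for the restriction of $e^{tA}$ to $E^s$ gives constants $C,\lambda>0$ with $\|e^{tA}w\|\le Ce^{-\lambda t}\|w\|$ for $w\in E^s$ and $t\ge0$. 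This is obtained from the Jordan form, where polynomial factors are absorbed by shrinking $\lambda$. With $M_K:=\sup_{z\in K}\|P^sz\|<\infty$, which is finite because $P^s$ is linear and $K$ is compact, we get
\[
\sup_{z\in K}\|e^{tA}P^sz\|\le CM_Ke^{-\lambda t}.
\]
Choose $T_K$ so that $CM_Ke^{-\lambda T_K}<\delta$. If $M_K=0$, any $T_K>0$ works, since the right-hand side vanishes and $\delta>0$. Then \eqref{E:orbitwise-thickness} holds for all $t\ge T_K$, and Theorem~\ref{T:orbitwise-thickness} shows that $\Omega$ is Runge.

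\textbf{Possible obstacles.} Two points need care. The first is the convention $\dist(\cdot,\emptyset)$ when $\Omega=\mathbb C^n$; in that case $\Omega$ is trivially Runge, so it can be handled separately. The second is to use the stable-subspace decay estimate uniformly in $t\ge0$, rather than only asymptotically. Beyond these, the reduction is routine, and the substantive work lies in Theorem~\ref{T:orbitwise-thickness} itself.
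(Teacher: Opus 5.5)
Your proof is correct and follows the paper's route. Both reduce to Theorem~\ref{T:orbitwise-thickness} by bounding $\operatorname{dist}(e^{tA}P^{cu}(K),\partial\Omega)$ from below by $\operatorname{dist}(E^{cu},\partial\Omega)$ and using the decay of $e^{tA}$ on $E^s$.

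The one real difference is how you obtain $E^{cu}\subset\Omega$. The paper uses Lemma~\ref{L: unstable-contained} to get $E^u\subset\Omega$ and then combines this with $E^c\subset\Omega$. That step is not automatic, since $\Omega$ need not be closed under addition. Your connectedness argument, using $E^{cu}\cap\partial\Omega=\emptyset$ and $0\in\Omega$, proves the inclusion directly. It also shows that, given $0\in\Omega$, the hypothesis $E^c\subset\Omega$ is redundant.
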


\noindent 
Next we focus at the complete holomorphic vector fields $V$ with a globally attracting fixed point at the origin, i.e., the flow $X_t(z)\to 0$ as $t\to \infty$. We also sometimes call such a vector field as complete holomorphic asymptotically stable with the origin as the equilibrium point.
Next, we give a short proof of a result from our preprint \cite{CG} which provides another broad class of Runge domains; it was used by
Forstneri\v c~\cite{FR2025} to generalize \cite[Theorem~1.5]{CG}.
\begin{theorem}\label{thm-spirallikeRunge1}
		Let $F$ be a complete holomorphic vector field with a globally attracting fixed point at the origin. Let $\Om \subset\mathbb{C}^n$ be a domain containing the origin and invariant under positive-time flows of $F$. Then $\Omega$ is a Runge domain.
	\end{theorem}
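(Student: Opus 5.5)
The plan is to use the flow to write \(\Omega\) as an increasing union of sets that are visibly Runge. Recall the standard fact that an increasing union of Runge domains is Runge. Denote by \(X_t\) the flow of \(F\). Since \(F\) is complete, each \(X_t\) with \(t\in\rea\) is a holomorphic automorphism of \(\cplx^n\), and \(X_{-t}=X_t^{-1}\).

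Fix a small ball \(B=B(0,r)\subset\Omega\). Because the origin is globally attracting, every \(z\in\cplx^n\) satisfies \(X_t(z)\to 0\). Consider
\[
\Omega_t:=X_{-t}(B),\qquad t\ge 0 .
\]
Positive-time invariance gives \(X_t(\Omega)\subset\Omega\), equivalently \(\Omega\subset X_{-t}(\Omega)\). This inclusion runs the wrong way, so I will not use it to compare \(\Omega_t\) with \(\Omega\). Instead I define
\[
U_t:=X_{-t}(B)\cap\Omega .
\]
I do not expect \(U_t\) to be Runge directly, so I will reorganize the argument around approximation.

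The cleaner route is to approximate functions directly. Let \(f\in\mathcal O(\Omega)\), let \(K\subset\Omega\) be compact, and let \(\eps>0\). The steps are as follows.

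First, choose a compact \(L\) with \(K\subset L\subset\Omega\) and a time \(T\) such that \(X_T(L)\subset B\). Such a \(T\) exists because the attraction is uniform on compact sets. This uniformity holds if the origin is an asymptotically stable equilibrium: a small neighbourhood is attracted uniformly, and compactness together with continuity of the flow then gives uniformity on \(L\). If the hypothesis only provides pointwise attraction, I would still need a trapping neighbourhood near \(0\), for instance a forward-invariant ball coming from the dynamics. I expect this to be the main technical obstacle.

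Second, use invariance: \(X_{-T}(B')\subset\Omega\) for suitable sets, that is, \(X_T(\Omega)\subset\Omega\). Set \(g:=f\circ X_{-T}\). This function is holomorphic on \(X_T(\Omega)\), which contains \(X_T(K)\subset B\).

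Third, \(B\) is convex, hence Runge, and \(g\) is holomorphic on a neighbourhood of the compact set \(X_T(K)\). Here there is a subtlety: \(g\) is defined on \(X_T(\Omega)\), not on all of \(B\), so I need \(B\subset X_T(\Omega)\). That would follow from \(X_{-T}(B)\subset\Omega\). Points of \(B\) flowed backward need not stay in \(\Omega\), so this is exactly where the argument must change. The fix is to take \(B\) to be a sublevel set \(\{\rho<c\}\) of a Lyapunov function (plurisubharmonic, or better, coming from linearization near \(0\)). Such a set is forward-invariant, so its backward images \(X_{-T}(B)\) increase to all of \(\cplx^n\). Then \(X_{-T}(B)\supset L\) for large \(T\), and we approximate on \(X_{-T}(B)\cap\Omega\). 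Runge-ness of \(\Omega\) itself will still need the polynomial-convexity argument in the final step.

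Fourth, the final step is to show that \(\widehat K\), the polynomial hull of \(K\), lies in \(\Omega\). Let \(w\in\widehat K\). For large \(T\), \(X_T(K)\) lies in a small polynomially convex neighbourhood \(W\) of \(0\) with \(W\subset\Omega\). The automorphism \(X_T\) maps the hull \(\mathcal O(\cplx^n)\)-convexly, because entire functions pull back under automorphisms to entire functions and polynomial hulls coincide with entire-function hulls. Hence \(X_T(\widehat K)=\widehat{X_T(K)}\subset W\subset\Omega\). Since \(\Omega\) is positive-time invariant but we need \(\widehat K\subset\Omega\), I would instead use the interpolation \(\widehat K\subset\bigcup_{s\in[0,T]}\) and argue by continuity in \(T\). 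Let \(S=\{s\in[0,T]:X_s(\widehat K)\subset\Omega\}\). This set contains \(T\). It is open because of compactness, and closed because hulls vary upper-semicontinuously and because \(\widehat{X_s(K)}=X_s(\widehat K)\) with \(X_s(K)\subset\Omega\) by invariance. Carrying out this connectedness argument, so that we reach \(s=0\), is the crux.
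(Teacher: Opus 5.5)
Your proposal does not reach a proof, and its final step aims at a statement that is false under the hypotheses.

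\textbf{Steps 2--3.} These compose in the wrong direction. The function $f\circ X_{-T}$ lives only on $X_T(\Omega)$, which need not contain $B$. The Lyapunov ``fix'' makes things worse: if the backward images $X_{-T}(B)$ exhaust $\mathbb{C}^n$, then $X_{-T}(B)\not\subset\Omega$ for large $T$ whenever $\Omega\neq\mathbb{C}^n$.

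\textbf{Step 4.} Here you try to show $\widehat K\subset\Omega$ for every compact $K\subset\Omega$, via closedness of $S=\{s\in[0,T]: X_s(\widehat K)\subset\Omega\}$. Upper semicontinuity only gives $X_{s}(\widehat K)\subset\overline{\Omega}$ at a limit time. In fact no argument can work. If $\widehat K\subset\Omega$ held for all such $K$, then $\Omega$ would be holomorphically convex, hence pseudoconvex, and the hypotheses do not force that.

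For a counterexample, take $V(z)=-z$ on $\mathbb{C}^2$ and $\Omega=\{|z_1|<1,|z_2|<1\}\cup\{|z_1|<2,|z_2|<1/10\}$. This domain is starlike, so $e^{-t}\Omega\subset\Omega$ for $t\geq 0$. It is a complete Reinhardt domain whose logarithmic image is L-shaped, so it is not logarithmically convex and hence not pseudoconvex. Therefore some compact $K\subset\Omega$ has $\widehat K\not\subset\Omega$. For that $K$, your set $S$ is of the form $(s_*,T]$ and is not closed. Yet $\Omega$ is Runge in the paper's sense: polynomials are dense in $\mathcal{O}(\Omega)$, and here Taylor series already converge locally uniformly. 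Density is all the theorem asserts.

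\textbf{The missing idea.} What you need is analytic continuation in \emph{complex} time, combined with duality, and composition in the opposite direction.

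By uniform attraction on compacta, $X_t(\overline{B(0,R)})\subset\Omega$ for all real $t\geq T$. Your worry here is legitimate; the paper handles it the same way, via stability near $0$. Consequently $f\circ X_t$ is holomorphic on a ball containing $K$, and is therefore a uniform limit of polynomials on $K$.

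Now let $\mu$ be a measure on $K$ annihilating all polynomials, and set $H(\tau)=\int_K f(X_\tau(z))\,d\mu(z)$. This is holomorphic on the open set $\{\tau\in\mathbb{C}: X_\tau(K)\subset\Omega\}$, which contains $[0,\infty)$ by positive-time invariance. Since $H=0$ on $[T,\infty)$, the identity theorem gives $H\equiv 0$ on the component containing $[0,\infty)$. Hence $\int_K f\,d\mu=H(0)=0$, and the duality (Hahn--Banach) criterion gives density.

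Your proposal only ever uses real times. The passage from large $t$ back to $t=0$ is exactly where holomorphic dependence on $\tau$, that is, completeness in complex time, is indispensable.
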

\begin{remark}
In the case of linear vector fields, Theorem~\ref{thm-spirallikeRunge1} says
that if all eigenvalues of the linear vector field have negative real parts,
then every positive times flow invariant domain with respect to that vector field is
Runge. It generalizes earlier results (for instance \cite[Theorem~3.1]{Hamada15}) substantially (see \cite{CG} for details).
\end{remark}

The second part of the paper concerns global linearization of holomorphic vector fields on $\cplx^n$.  Given a complex manifold $M$ and a holomorphic vector field $V$ on $M$, the associated flow, when defined, consists of biholomorphic maps.  The basic question is whether there exists a global change of coordinates, namely a biholomorphism $\Phi:M\to M$, that transforms the dynamics into a linear model.  Equivalently, one asks whether $V$ can be conjugated to a linear vector field so that, in global coordinates, its flow becomes a one-parameter group of linear transformations.

Such a question has genuine global obstructions.  On general complex manifolds, some linear structure must already be present.  We focus on $\cplx^n$.  Even in $\cplx^n$, $n\geq 2$, not every complete holomorphic vector field is linearizable; see Suzuki's work on holomorphic $\cplx$-actions on $\cplx^2$ \cite{Suzuki1970}.  Recall that two holomorphic vector fields $V_j\in\XO(\Omega_j)$, $j=1,2$, with flows $X_t^j$, are \emph{conjugate} if there exists a biholomorphism $H:\Omega_1\to\Omega_2$ such that
\[
  H\circ X_t^1\circ H^{-1}=X_t^2
\]
for every time $t$ for which both sides are defined.

This problem is a global counterpart of the classical local linearization theory of Poincar\'e, Siegel, Brjuno, Sternberg, and others.  In the local setting one studies a vector field near an equilibrium point and seeks a holomorphic coordinate change that reduces it to its linear part.  Under suitable spectral hypotheses, such a reduction is possible.  We recall one classical theorem from \cite[page 190]{Arnold1988}.



\begin{result}[Poincar\'{e}]\label{R:Poincare}
Let $V \in \mathfrak{X}_{\mathcal{O}}(\Omega)$ be a holomorphic vector field such that $V(a)=0$, and let $A = DV(a)$ denote its linearization at $a$ for some $a \in \Om$ and $\sigma(A)$ be the spectrum of $A$. Assume that 
\begin{itemize}
\item [(i)]$A$ is non-resonant, that is,
\begin{align}\label{eq:nonresonance}
\lambda_{j} \neq \sum_{l=1}^{n} m_{l}\lambda_{l},
\end{align}
for all $j \in \{1,2,\dots,n\}$ and all $n$-tuples $(m_{1},\dots,m_{n})$ of non-negative integers with $\sum_{l=1}^{n} m_{l} \geq 2$, where $\lambda_{j} \in \sigma(A)$; and
\item[(ii)] the origin does not lie in the convex hull of $\lambda_1,\dots, \lambda_n$ in $\cplx$.
\end{itemize}
Then  $V$ is locally conjugate to its linearization via a biholomorphism defined in a neighborhood of the equilibrium point $a$.
\end{result}

Sternberg's examples show that, without a non-resonance condition such as \eqref{eq:nonresonance}, a holomorphic vector field need not be locally linearizable by a biholomorphic change of coordinates; see \cite[p.~127]{perko}.  For further background on local analytic differential equations, see \cite{LECTURENOTEINODE} and the references therein.

Global linearization is more rigid.  One must control not only the local behavior near a fixed point, but also the geometry of the ambient space and the global structure of the flow.  Completeness is necessary for a globally defined flow, but it is not sufficient.  Global invariants such as orbit structure, holomorphic first integrals, and invariant foliations may obstruct linearization.

We study global linearization under asymptotic stability assumptions.  Our first criterion is formulated in terms of an explicit improper integral.

 \begin{theorem}
\label{T:integrability-linearization}
Let \(V\in\mathcal X_{\mathcal O}(\mathbb C^n)\) be a complete
holomorphic vector field whose origin is a globally asymptotically
stable fixed point and $A=DV(0)$. Let \(X_t\) denote its flow, and
$ V(z)=Az+R(z)$ where
$ R(z)=o(\|z\|) $ as $ z\to 0$.
\noindent
Suppose that, for every compact set \(K\subset\mathbb C^n\), the improper
integral
\[
        \int_0^\infty e^{-sA}R(X_s(z))\,ds
\]
converges uniformly for \(z\in K\). 
Then the limit
\[
        F(z):=\lim_{t\to\infty}e^{-tA}X_t(z)
\]
exists locally uniformly on \(\mathbb C^n\), and \(F\in
\operatorname{Aut}(\mathbb C^n)\). Moreover,
\[
        F\circ X_t\circ F^{-1}=e^{tA}
        \qquad \text{for all } t\in\mathbb \cplx.
\]

\end{theorem}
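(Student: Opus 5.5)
The plan is to study $F_t(z):=e^{-tA}X_t(z)$ directly. Differentiating in $t$ and using $\frac{d}{dt}X_t=V(X_t)=AX_t+R(X_t)$ gives
\[
\frac{d}{dt}F_t(z)=-Ae^{-tA}X_t(z)+e^{-tA}\bigl(AX_t(z)+R(X_t(z))\bigr)=e^{-tA}R(X_t(z)),
\]
so that
\[
F_t(z)=z+\int_0^t e^{-sA}R(X_s(z))\,ds .
\]
By the assumed uniform convergence of the improper integral on compacts, $F_t\to F$ locally uniformly, where
\[
F(z)=z+\int_0^\infty e^{-sA}R(X_s(z))\,ds .
\]
Each $F_t$ is holomorphic, being a composition of a holomorphic automorphism with a linear map. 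Hence $F$ is holomorphic on $\cn$ by Weierstrass. The conjugacy relation is formal. For fixed real $t$ and $s\ge0$,
\[
F_s(X_t(z))=e^{-sA}X_{s+t}(z)=e^{tA}F_{s+t}(z).
\]
Letting $s\to\infty$ yields $F\circ X_t=e^{tA}\circ F$ for all real $t$. For complex $t$ we fix $z$ and view both sides as entire functions of $t$, since the flow is complete for complex time. They agree on $\rea$, so they agree on $\cplx$ by the identity theorem.

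It remains to show $F\in\operatorname{Aut}(\cn)$, which I expect to be the main obstacle. For injectivity, $F$ is a locally uniform limit of injective holomorphic maps $F_t$. By Hurwitz's theorem in several variables, $F$ is either injective or has identically vanishing Jacobian. To exclude degeneracy, I would compute $DF(0)$. Since $X_s(0)=0$, we have $F_t(0)=0$. Also $DX_s(0)=e^{sA}$ because $DV(0)=A$, so $DF_t(0)=I$ for all $t$ and therefore $DF(0)=I$. Hence $F$ is injective. Alternatively, for injectivity one can argue directly: if $F(z)=F(w)$, then $F(X_t z)=F(X_t w)$ for all $t$. For large $t$ both points lie in a small ball around $0$, where $F$ is injective because $DF(0)=I$. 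So $X_tz=X_tw$, and thus $z=w$.

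For surjectivity I would use the attracting dynamics. Pick a ball $B$ around $0$ on which $F$ is a biholomorphism onto an open set $U\ni0$. Since every eigenvalue of $A$ has negative real part, as forced by global asymptotic stability together with the existence of a local linearization near $0$, we have $e^{tA}w\to0$ for any $w\in\cn$. So $e^{tA}w\in U$ for some $t\ge0$. Write $e^{tA}w=F(\zeta)$ and set $z=X_{-t}(\zeta)$, using completeness. Then
\[
F(z)=e^{-tA}F(X_t z)=e^{-tA}e^{tA}w=w.
\]
The delicate point is justifying $\rl\,\sigma(A)<0$. The hypothesis only gives attraction, so I would instead argue surjectivity through invariance of $F(\cn)$ under $e^{tA}$ for all complex $t$, together with $U\subset F(\cn)$. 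If some eigenvalue had nonnegative real part, then $F$ conjugates $X_t$ near $0$ to $e^{tA}$, and attraction of $X_t$ to $0$ would force $e^{tA}\to0$ on $U$, a contradiction. This settles $\rl\,\sigma(A)<0$, and surjectivity follows as above.
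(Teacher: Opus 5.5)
Your proposal is correct. It shares the paper's skeleton: the identity $e^{-tA}X_t(z)=z+\int_0^t e^{-sA}R(X_s(z))\,ds$, Weierstrass, $DF(0)=I$ from $DX_t(0)=e^{tA}$, surjectivity by pulling $e^{tA}w$ back with $X_{-t}$, and extension to complex time by the identity theorem. The differences are in the order of the steps and in what each step needs.

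\emph{Conjugacy.} You establish it first, from the exact relation $F_s(X_tz)=e^{tA}F_{s+t}(z)$, which only uses that $e^{tA}$ is a fixed bounded linear map. The paper's Step III instead runs an $\varepsilon/2$ estimate through the bound $\|e^{sA}\|\le m(r)e^{s(k_+(A)+r)}$ with $k_+(A)+r<0$, so it already presupposes the sign of the spectrum.

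\emph{Injectivity.} The paper proves $\det DF\neq 0$ via Hurwitz and then excludes a double point with Lloyd's vector-valued Rouch\'e theorem. Your first route cites the several-variable Hurwitz dichotomy, which is exactly that argument packaged. Your second route dispenses with Hurwitz and Rouch\'e altogether: $F(z)=F(w)$ forces $F(X_tz)=F(X_tw)$, both points eventually lie in a ball where $F$ is injective, and injectivity of $X_t$ finishes it. The trade-off is that this route relies on the conjugacy and on attraction, rather than on a general fact about limits of injective maps.

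\emph{Spectrum of $A$.} The paper uses $\operatorname{Re}\sigma(A)<0$ tacitly: in Step III, and in Step IV to ensure $e^{sA}w\in F(\cn)$. Elsewhere it merely asserts this without proof. You derive it instead: $e^{tA}F(\zeta)=F(X_t\zeta)\to F(0)=0$ on the open set $F(B)\ni 0$, hence $e^{tA}\to 0$ on all of $\cn$ by linearity. This fills that gap using only pointwise attraction.

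One small point: your remark about ``invariance of $F(\cn)$ under $e^{tA}$ for all complex $t$'' is not needed. The argument you give immediately after it is the one that does the work.
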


\noindent The next corollary formulated in terms of a spectral gap condition. For that we set
\[
        k_-(A):=\min_{\lambda\in\sigma(A)}\operatorname{Re}\lambda,
        \qquad
        k_+(A):=\max_{\lambda\in\sigma(A)}\operatorname{Re}\lambda .
\]
 
\begin{corollary}\label{L:composconver}
Let $V \in \mathfrak{X}_{\mathcal{O}}{(\mathbb{C}^{n})}$ $(n \geq 2)$ be a complete, globally asymptotically stable  vector field  and $V$ is of the form  
\[
V(z)=Az+O(|z|^m)\quad\;\text{as}\quad \|z\| \to 0.
\]
Suppose that $mk_+(A)<k_-(A)$. Then there exists $F\in {\rm Aut}(\cn)$ such that $F\circ X_{t} \circ F^{-1}(z)=e^{tA}z$  for all $z \in \cn$ for all  $t \in \cplx$.
 \end{corollary}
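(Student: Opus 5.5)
We reduce Corollary~\ref{L:composconver} to Theorem~\ref{T:integrability-linearization}. All we need is that $\int_0^\infty e^{-sA}R(X_s(z))\,ds$ converges uniformly on every compact set $K\subset\cn$. Here $R(z)=V(z)-Az=O(\|z\|^m)$ near the origin. The plan is to compare the exponential decay rate of the orbit $X_s(z)$, which is governed by $k_+(A)$, with the growth rate of $e^{-sA}$, which is governed by $-k_-(A)$. The hypothesis $mk_+(A)<k_-(A)$ says exactly that the decay wins.

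\emph{Step 1 (spectrum and constants).} Since the origin is globally asymptotically stable, $k_+(A)\le 0$. In fact $k_+(A)<0$: otherwise $mk_+(A)<k_-(A)\le k_+(A)$ would force $k_+(A)>0$, contradicting stability. Fix $\eps>0$ small enough that
\[
m\bigl(k_+(A)+\eps\bigr)<k_-(A)-\eps \quad\text{and}\quad k_+(A)+\eps<0 .
\]
Standard Jordan-form estimates give a constant $C$ with $\|e^{sA}\|\le Ce^{s(k_+(A)+\eps)}$ and $\|e^{-sA}\|\le Ce^{-s(k_-(A)-\eps)}$ for $s\ge0$.

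\emph{Step 2 (uniform exponential decay of orbits on compacts).} By the variation-of-constants formula
\[
X_t(z)=e^{tA}z+\int_0^t e^{(t-s)A}R(X_s(z))\,ds,
\]
together with $\|R(w)\|\le c\|w\|^m\le \delta\|w\|$ on a small ball $B_r$, a Gronwall argument shows the following. For $w\in B_{r'}$ with $r'\le r$ small, the orbit stays in $B_r$ and satisfies $\|X_t(w)\|\le C'e^{t(k_+(A)+\eps)}\|w\|$ for all $t\ge 0$. (Equivalently, one can use a Lyapunov norm adapted to $A$, in which $B_{r'}$ is forward invariant.)

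Global attraction gives $X_t(z)\to 0$ pointwise. To get uniformity on a compact $K$, for each $z\in K$ pick a time $t_z$ with $X_{t_z}(z)\in B_{r'/2}$. By continuity of the flow, $X_{t_z}$ maps a neighborhood of $z$ into $B_{r'}$. Compactness of $K$ then yields a single $T_K$ such that $X_t(K)\subset B_r$ and
\[
\|X_t(z)\|\le C_K e^{t(k_+(A)+\eps)} \quad\text{for all } t\ge T_K,\ z\in K .
\]
This uniformity is the main obstacle: it needs forward invariance of a small neighborhood of $0$ on top of mere pointwise convergence, which is why the local estimate must be proved first.

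\emph{Step 3 (convergence of the integral).} For $s\ge T_K$ and $z\in K$,
\[
\|e^{-sA}R(X_s(z))\|\le C e^{-s(k_-(A)-\eps)}\,c\,C_K^m e^{ms(k_+(A)+\eps)} .
\]
The exponent is negative by the choice of $\eps$, so the tail integrals tend to zero uniformly on $K$. On $[0,T_K]$ the integrand is continuous in $(s,z)$, hence bounded. Therefore the improper integral converges uniformly on $K$. Theorem~\ref{T:integrability-linearization} now yields $F\in{\rm Aut}(\cn)$ with $F\circ X_t\circ F^{-1}=e^{tA}$ for all $t\in\cplx$.
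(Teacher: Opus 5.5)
Your proof is correct, and its skeleton matches the paper's. Both reduce to Theorem~\ref{T:integrability-linearization} and bound the tail of $\int_0^\infty e^{-sA}R(X_s(z))\,ds$ by a multiple of $e^{-s(k_-(A)-\eps)}e^{ms(k_+(A)+\eps)}$, which is integrable under the gap condition.

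The genuine difference is how you obtain the orbit decay $\|X_s(z)\|\le C_K e^{s(k_+(A)+\eps)}$ on compact sets. The paper takes a $C^1$ Hartman--Grobman linearization $H$ near $0$ and extends it to all of $\cn$ by $\widetilde H=e^{-sA}\circ H\circ X_s$. It then writes $X_s=\widetilde H^{-1}\circ e^{sA}\circ\widetilde H$ and applies the mean value inequality. You instead prove the local decay directly, by variation of constants and Gronwall (or a Lyapunov norm), and then spread it over $K$ with a flow-continuity and compactness argument.

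Your route is more elementary and self-contained. It does not need the $C^1$ refinement of Hartman--Grobman, which holds here because $0$ is a hyperbolic sink; the basic theorem gives only a homeomorphism, which would not support the mean-value step. It also makes explicit two points the paper passes over quickly. The first is $k_+(A)<0$, which the paper simply asserts; it follows from $mk_+(A)<k_-(A)\le k_+(A)$ alone, since this gives $(m-1)k_+(A)<0$ and the hypothesis forces $m\ge 2$. The second is that the decay constant can be chosen uniformly for $z\in K$.

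The paper's route, in exchange, presents the decay estimate as a consequence of a global $C^1$ conjugacy. That is conceptually tidy but heavier than what the corollary needs.
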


The conclusion of Theorem\ref{T:integrability-linearization} and Corollary~\ref{L:composconver} can often be obtained by first proving a local linearization theorem and then extending the local conjugacy along the attracting flow.  The point of the present approach is that the global linearizing automorphism is given explicitly by the limit of $e^{-tA}X_t$.  This limit construction also has applications to approximation by automorphisms on certain flow-invariant domains; see \cite{CG}.  The following elementary consequence illustrates how invariant objects are transferred from the linear model to the nonlinear flow.

\begin{corollary}\label{coro-transfer}
Let \(V\) satisfy the hypotheses of Theorem~\ref{T:integrability-linearization}, and let
\(F\in\operatorname{Aut}(\mathbb C^n)\) be the linearizing automorphism, so that
\[
        F\circ X_t\circ F^{-1}=e^{tA}.
\]
Then the following hold.

\begin{itemize}
\item If \(D\subset\mathbb C^n\) satisfies
$e^{tA}(D)\subset D$ for all $t\geq 0$,
then
$\Omega:=F^{-1}(D)$ is a positive-time flow invariant domain for $V$.


\item If \(J\) is a first integral of the linear flow, that is,
\[
        J(e^{tA}w)=J(w),
\]
then
$I:=J\circ F$
is a first integral of the nonlinear flow:
\[
        I(X_t(z))=I(z).
\]
\end{itemize}
\end{corollary}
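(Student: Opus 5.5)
The plan is to verify both items directly from the conjugacy identity $F\circ X_t\circ F^{-1}=e^{tA}$ supplied by Theorem~\ref{T:integrability-linearization}. Rewriting it gives
\[
X_t = F^{-1}\circ e^{tA}\circ F \qquad\text{for all } t\in\cplx .
\]
Both statements then reduce to bookkeeping with this formula. The one point to watch is that $\Omega=F^{-1}(D)$ has to be a domain, so I will read $D$ in the first item as a domain, which is the intended setting.

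For the first item, take $D$ open and connected. Since $F\in\operatorname{Aut}(\cn)$, the map $F^{-1}$ is a homeomorphism of $\cn$, so $\Omega=F^{-1}(D)$ is again open and connected, hence a domain.

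To check invariance, fix $t\geq 0$ and $z\in\Omega$. Then $F(z)\in D$, and therefore
\[
X_t(z)=F^{-1}\bigl(e^{tA}F(z)\bigr)\in F^{-1}\bigl(e^{tA}(D)\bigr)\subset F^{-1}(D)=\Omega .
\]
So $X_t(\Omega)\subset\Omega$ for every $t\geq 0$, which is exactly positive-time flow invariance for $V$.

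For the second item, set $I=J\circ F$. For any $z$ and any $t$ for which $J(e^{tA}w)=J(w)$ is assumed (real $t$, or all complex $t$ if that is the hypothesis), we have
\[
I(X_t(z))=J\bigl(F(X_t(z))\bigr)=J\bigl(e^{tA}F(z)\bigr)=J(F(z))=I(z).
\]
The middle equality uses $F\circ X_t=e^{tA}\circ F$, and the next one uses the first-integral property of $J$ at the point $w=F(z)$. Regularity also transfers: if $J$ is holomorphic (respectively continuous), then so is $I$, because $F$ is holomorphic.

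There is no real obstacle here. The only substantive ingredient is the global conjugacy with $F$ an automorphism of all of $\cn$, and that is supplied by Theorem~\ref{T:integrability-linearization}. Because $F$ is globally defined and bijective, no domain-of-definition issues arise in either item.
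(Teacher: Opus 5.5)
Your proposal is correct and is the intended argument. The paper states this corollary as an elementary consequence and gives no written proof; your direct verification from $X_t=F^{-1}\circ e^{tA}\circ F$ is exactly what it relies on, including your reading of $D$ as a domain so that $\Omega=F^{-1}(D)$ is one.
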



\section{Technical Preliminaries}\label{sec-prelims}
In this section we collect a few standard facts used in the proofs.  We begin with the stable, unstable, and center subspaces of a matrix.
	
	\begin{definition}
	    Let $A \in M_{n}(\cplx)$ and $\sigma(A)=\{\lambda_{1}, \lambda_{2}, \cdots ,\lambda_{m}\}$. Let $m_{j}$ denote the size of the largest Jordan block of the matrix $A$ corresponding to $\lambda_{j}$. The stable, unstable, and center subspaces of the matrix $A$ are 
 \begin{align*}
     E_{s}&:= \bigoplus_{\rl(\lambda_{j})<0}\text{Ker}((A-\lambda_{j}I)^{m_{j}})\\
      E_{u}&:= \bigoplus_{\rl(\lambda_{j})>0}\text{Ker}((A-\lambda_{j}I)^{m_{j}})\\
       E_{c}&:= \bigoplus_{\rl(\lambda_{j})=0}\text{Ker}((A-\lambda_{j}I)^{m_{j}}).
 \end{align*}


	\end{definition}

We shall use the following elementary matrix estimates.
\begin{result} \cite{SimDav}\label{L:exp}
		 For every $\epsilon>0$ there exist $\rho(\epsilon), m(\epsilon)>0$, such that 
		 \begin{align*}
		 	\|e^{-tA}z\| &\leq \rho(\epsilon)e^{-(k_{-}(A)-\epsilon)t}\|z\|\\
		 	\|e^{tA}z\| &\leq m(\epsilon)e^{(k_{+}(A)+\epsilon)t}\|z\|
		 \end{align*}
	for all $t \geq 0$, and for all $z \in \cn$. 
\end{result}
 
The next result is a vector-valued version of Rouch\'e's theorem due to Lloyd \cite[Theorem 3]{Lloyd}. This will be used in the proofs of global linearization theorems.
 \begin{result}\cite[Theorem 3]{Lloyd}\label{R:Lloyd}
Let $D\subset\cplx^n$ be a bounded domain.  Suppose $f,g:\overline D\to\cplx^n$ are continuous and holomorphic on $D$, and assume that
\[
  \norm{g(z)}<\norm{f(z)}
  \qquad\text{for every } z\in\partial D.
\]
Then $f$ and $f+g$ have the same number of zeros in $D$, counted with multiplicities.
 \end{result}

\section{Runge domains}\label{sec-Runge}

In this section we prove Theorems~\ref{T:orbitwise-thickness}, \ref{thm-spirallikeRunge1} and Corollary~\ref{T:Runge}. Then we give examples and constructions of Runge domains arising from holomorphic flows. We begin by establishing a couple of preparatory lemmas.

\begin{lemma}\label{L: unstable-contained}
		Let $\Om  \subset \cn$ be a flow invariant domain with respect to $A \in GL(n, \mathbb{C})$ containing the origin. Then the unstable subspace $E_{u}$ of the matrix $A$ is contained in $\Om$.
	\end{lemma}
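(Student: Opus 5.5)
The plan is to exploit the positive-time invariance $e^{tA}(\Omega)\subset\Omega$ for $t\ge 0$, which is what "flow invariant" means here. Since $0\in\Omega$ and $\Omega$ is open, there is $r>0$ with $B(0,r)\subset\Omega$. Fix $w\in E_u$. The idea is to realize $w$ as the image under $e^{tA}$ of a point in this small ball, for some large $t\ge 0$.

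First, $E_u$ is an $A$-invariant subspace, so $e^{tA}$ restricts to an invertible linear map of $E_u$ with inverse $e^{-tA}|_{E_u}$. The restriction $A|_{E_u}$ has spectrum $\{\lambda_j:\rl\lambda_j>0\}$. Applying Result~\ref{L:exp} to $-A|_{E_u}$ (equivalently, to the matrix of $A|_{E_u}$ in a basis of $E_u$), choose $0<\epsilon<\min_{\rl\lambda_j>0}\rl\lambda_j$. We then get
\[
\|e^{-tA}w\|\le \rho(\epsilon)\,e^{-(k_-(A|_{E_u})-\epsilon)t}\|w\|,\qquad t\ge 0,
\]
and the exponent is negative. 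Hence $e^{-tA}w\to 0$ as $t\to\infty$.

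Next, choose $T\ge 0$ so large that $z:=e^{-TA}w\in B(0,r)\subset\Omega$. Invariance then gives $w=e^{TA}z\in e^{TA}(\Omega)\subset\Omega$. Since $w\in E_u$ was arbitrary, $E_u\subset\Omega$. If $E_u=\{0\}$, the claim is trivial because $0\in\Omega$.

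The only real point is the decay estimate of the backward flow on $E_u$. It needs Result~\ref{L:exp} applied to the restricted operator rather than to $A$ on all of $\cn$, since on $\cn$ the quantity $k_-(A)$ may be nonpositive. Alternatively, one can argue directly via the Jordan form. On each generalized eigenspace $\mathrm{Ker}((A-\lambda_jI)^{m_j})$, the map $e^{-tA}$ equals $e^{-t\lambda_j}$ times a polynomial in $t$ of degree less than $m_j$. This decays exponentially when $\rl\lambda_j>0$, and summing over the finitely many such $j$ gives the estimate.
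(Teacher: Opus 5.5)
Your proof is correct and follows essentially the same route as the paper: pull $w\in E_u$ back by $e^{-TA}$ into a neighborhood of the origin contained in $\Omega$, then push it forward by $e^{TA}$ using positive-time invariance. You also make explicit two points the paper leaves implicit, namely that $\Omega$ contains a ball $B(0,r)$ and that $e^{-tA}w\to 0$ on $E_u$ (via the estimate for the restriction $A|_{E_u}$ or via the Jordan form).
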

	\begin{proof}
		Let $E_{u}$ be the unstable subspace of $A$ and $v \in E_{u}$. Since $e^{-tA}v \to 0$ as $t \to \infty$,  there exist $T>0$ such that $e^{-TA}v \in \Om$. Note that  $\Om$ is a flow invariant  domain with respect to $A$. Therefore,   $e^{TA}(e^{-TA}v) =v\in \Om$.
\end{proof}

	\begin{lemma}\label{P: global asymptotic property}
		Let $V \in \mathfrak{X}_{\mathcal{O}}{(\mathbb{C}^{n})}$ be a complete vector field with a globally attracting fixed point at the origin and $\Omega$ be a positive-time flow invariant domain for $V$ containing the origin. Then, for every  compact subset $K \subset \mathbb{C}^{n}$, there exists a real number $t_{K}>0$ such that $X(t,z) \in \Omega$ for all $t >t_{K}$ and for all $z \in K$.
	\end{lemma}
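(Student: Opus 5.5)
The plan is to argue by a compactness argument, using that the origin is an interior point of $\Omega$ and is globally attracting and stable. Since $0\in\Omega$ and $\Omega$ is open, fix $r>0$ with $\overline{B(0,r)}\subset\Omega$. The goal is to find a neighborhood $U$ of $0$ and a time $T$ such that every trajectory starting in $U$ stays in $B(0,r)$ for all times $t\ge T$. Actually it is simpler to avoid Lyapunov stability and use positive-time invariance of $\Omega$ directly: once $X(t_0,z)\in\Omega$, we get $X(t,z)=X(t-t_0,X(t_0,z))\in\Omega$ for all $t\ge t_0$. So it suffices to find a single time $t_K$ such that $X(t_K,z)\in\Omega$ for every $z\in K$.

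First, for each $z\in K$, global attraction gives $X(t,z)\to0$. Hence there is a time $t_z$ with $X(t_z,z)\in B(0,r)\subset\Omega$.

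Next, by continuity of the flow map $(t,w)\mapsto X(t,w)$, which is guaranteed since $V$ is complete and holomorphic, there is an open neighborhood $U_z$ of $z$ with $X(t_z,w)\in\Omega$ for all $w\in U_z$, using that $\Omega$ is open. By positive-time invariance, $X(t,w)\in\Omega$ for all $t\ge t_z$ and all $w\in U_z$.

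Finally, cover $K$ by finitely many sets $U_{z_1},\dots,U_{z_N}$ and set $t_K:=\max_j t_{z_j}$. Any $w\in K$ lies in some $U_{z_j}$, so $X(t,w)\in\Omega$ for all $t>t_K\ge t_{z_j}$.

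The only point requiring care is the semigroup identity $X(t,z)=X(t-t_0,X(t_0,z))$ and the joint continuity of the flow. Both are standard for complete vector fields, so I expect no real obstacle. Note that the hypothesis $0\in\Omega$ is essential: it is what makes the attraction to the origin useful.
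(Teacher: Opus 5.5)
Your proof is correct. It shares the paper's skeleton: pointwise attraction gives $t_z$, continuity of $X_{t_z}$ gives a neighbourhood of each $z\in K$, and a finite subcover gives $t_K=\max_j t_{z_j}$. The difference is the step that keeps an orbit in $\Omega$ after time $t_z$.

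\emph{The paper's route.} The paper never uses the positive-time invariance of $\Omega$. Instead it invokes Lyapunov stability of the origin: it chooses $\delta>0$ with $X(t,w)\in B(0,r)$ for all $t\ge 0$ and $w\in B(0,\delta)$. Then once $X(t_z,w)$ enters $B(0,\delta)$, the whole forward orbit stays in $B(0,r)\subset\Omega$.

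\emph{Your route.} You propagate forward with $X(t,w)=X(t-t_z,X(t_z,w))$ and the invariance $X_s(\Omega)\subset\Omega$. This needs only pointwise attraction and continuity of the time-$t_z$ map. So your argument works even if the origin were merely attracting and not stable; the paper gets $\delta$ by reading ``globally attracting'' as including stability.

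\emph{What the paper's route buys.} It proves a stronger, $\Omega$-independent statement: $X_t(K)\subset B(0,r)$ for all large $t$, i.e.\ $X_t\to 0$ uniformly on compact sets. The paper uses exactly this later, in the flow-tube construction and in the proof of the spectral-gap corollary. Your argument does not give that unless the small ball is replaced by a forward-invariant neighbourhood of $0$, which again requires stability.

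A trivial point: to guarantee $t_K>0$, choose each $t_z>0$. This is always possible since $X(t,z)\to 0$.
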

	\begin{proof}
		Let $r>0$ such that $B(0,r) \subset \Om$.  Since the origin is the  globally attracting fixed point of  $F$, there exists $\delta>0$ such that $X(t,z) \in B(0,r)$ $\forall t \geq 0$ and  $\forall z \in B(0, \delta)$.
  Note that $\lim_{t \to \infty}X(t,z) \to 0$ as  $t \to \infty $ for every  $z \in \cn$. Therefore, for every $z \in K$  there exist $t_{z}>0$ such that $X(t_{z},z) \in B(0, \frac{\delta}{3})$. Hence, from the choice of $\delta>0$, we obtain 
  $$
  X(t,z) \in B(0,r) \subseteq \Omega\;\; \forall t \geq  t_{z}.
  $$
  Let $K \subset \cn$ be a compact subset. For every $z \in K$ there exist $r_{z}, t_{z}>0$ such that $X(t,w) \in B(0, \frac{\delta}{3})$ for all $w \in B(z,r_{z})$ and $t\geq t_{z}$. Since $K$ is compact, hence, there exists $m \in \mathbb{N}$ and $\{z_{1},z_{2}, \ldots ,z_{m}\} \subset K$ such that $K \subset \cup_{j=1}^{m}B(z_{j},r_{z_{j}})$. Let $t_{K}=\max \{t_{z_{1}},t_{z_{2}}, \ldots ,t_{z_{m}}\}$. Therefore, $X(t,z) \in B(0, r)$ for all $z \in K$  for all $t>t_{K}$.
  
\end{proof}

\begin{proof}[Proof of Theorem~\ref{T:orbitwise-thickness}]
We divide the proof into two steps.

First, we prove that for every compact set \(K\subset\mathbb C^n\),
there exists \(T_K>0\) such that
\[
        e^{tA}K\subset\Omega
        \qquad \text{for all } t\geq T_K .
        \tag{3.2}\label{E:absorbing}
\]
Let \(K\subset\mathbb C^n\) be compact. By hypothesis, there exists
\(T_K>0\) such that \eqref{E:orbitwise-thickness} holds for every
\(t\geq T_K\). Fix \(z\in K\) and \(t\geq T_K\). Write
\[
        z=P^s z+P^{cu}z .
\]
Since \(E^s\) and \(E^{cu}\) are invariant under \(A\), we have
\[
        e^{tA}z
        =
        e^{tA}P^s z+e^{tA}P^{cu}z .
\]

Note that 
$a_{t}(z):= e^{tA}P^{cu}z\in E^{cu}\subset\Omega$, also $ a_{t}(z)\in e^{tA}P^{cu}(K).$
Therefore,
\[
\operatorname{dist}(a_t(z),\partial\Omega)
        \geq
        \operatorname{dist}
        \bigl(e^{tA}P^{cu}(K),\partial\Omega\bigr).
\]
By \eqref{E:orbitwise-thickness}, we get
\[
        \operatorname{dist}(a_t(z),\partial\Omega)
        >
        \sup_{\zeta\in K}\|e^{tA}P^s\zeta\|
        \geq
        \|e^{tA}P^s z\|.
\]
Hence
\[
        \|e^{tA}P^s z\|
        <
        \operatorname{dist}(a_t(z),\partial\Omega).
\]
We now use the elementary fact that if \(a\in\Omega\) and
$\|h\|<\operatorname{dist}(a,\partial\Omega)$,
then \(a+h\in\Omega\). Indeed, if \(a+h\notin\Omega\), then the line
segment joining \(a\) to \(a+h\) must meet \(\partial\Omega\). Thus
there would exist \(0\leq s\leq 1\) such that
$
        a+sh\in\partial\Omega,
$
and consequently
\[
        \operatorname{dist}(a,\partial\Omega)\leq \|sh\|\leq \|h\|,
\]
which is a contradiction.
Applying this with
$
        a=e^{tA}P^{cu}z,
        $ and  $
        h=e^{tA}P^s z,
$
we obtain
\[
        e^{tA}z
        =
        e^{tA}P^{cu}z+e^{tA}P^s z
        \in\Omega.
\]
Since \(z\in K\) was arbitrary, this proves
$e^{tA}K\subset\Omega$ for all $t\geq T_K$.

We now prove that \(\Omega\) is Runge. Let \(L\Subset\Omega\) be compact,
let \(f\in\mathcal O(\Omega)\), and let \(\mu\) be a complex Borel measure
supported on \(L\) such that
\[
        \int_L p\,d\mu=0
\]
for every holomorphic polynomial \(p\in\mathbb C[z_1,\ldots,z_n]\).
By the standard duality criterion for Runge domains, it is enough to
prove that
\[
        \int_L f\,d\mu=0 .
\]
Define
\[
        S_L
        :=
        \{\tau\in\mathbb C:e^{\tau A}L\subset\Omega\}.
\]
Since the map
$(\tau,z)\longmapsto e^{\tau A}z$
is holomorphic, \(S_L\) is open. Moreover, since \(L\subset\Omega\) and
\(\Omega\) is positive-time invariant domain under \(e^{tA}\), we have
$[0,\infty)\subset S_L$.
Define
\[
        H(\tau)
        :=
        \int_L f(e^{\tau A}z)\,d\mu(z),
        \qquad \tau\in S_L .
\]
Then \(H\) is holomorphic on \(S_L\).
Choose \(R>0\) such that
$L\subset B(0,R)$.
By Equation~\eqref{E:absorbing} proved above, applied to the compact set
\(\overline{B(0,R)}\), there exists \(T_R>0\) such that
\[
        e^{tA}\overline{B(0,R)}\subset\Omega
        \qquad \text{for all } t\geq T_R .
\]
Therefore, for every \(t\geq T_R\), the function
$g(z):=f(e^{tA}z)$
is holomorphic on a neighborhood of \(\overline{B(0,R)}\). Hence, there exists a sequence
of holomorphic polynomials \(\{p_j\}\) such that $p_j\longrightarrow g$
uniformly on \(L\). Hence
\[
\begin{aligned}
        H(t)
        &
        =\int_L f(e^{tA}z)\,d\mu(z)  
        =\int_L g(z)\,d\mu(z)
        =
        \lim_{j\to\infty}\int_L p_j(z)\,d\mu(z) 
        =
        0
\end{aligned}
\]
for every \(t\geq T_R\).

Thus, \(H\) vanishes on the real interval \([T_R,\infty)\). By the
identity theorem, \(H\) vanishes on the connected component of \(S_L\)
which contains \([T_R,\infty)\). Since
        $[0,\infty)\subset S_L$,
the point \(0\) lies in the same connected component. Therefore
\[
        0=H(0)=\int_L f(z)\,d\mu(z).
\]
This proves that \(\Omega\) is Runge.
\end{proof}

\begin{proof}[Proof of Corollary~\ref{T:Runge}]
By Lemma~\ref{L: unstable-contained}, $E^u\subset\Omega$.  Since $E^c\subset\Omega$ by assumption, we have $E^{cu}=E^u\oplus E^c\subset\Omega$.  Put
\[
  d:=\dist(E^{cu},\partial\Omega)>0.
\]
For every compact set $K\subset\cplx^n$,
\[
  \dist\bigl(e^{tA}P^{cu}(K),\partial\Omega\bigr)\ge d
  \quad \text{for every } t\ge 0,
\]
because $e^{tA}P^{cu}(K)\subset E^{cu}$.  On the other hand,
\[
  \sup_{z\in K}\norm{e^{tA}P^s z}\to 0
  \qquad (t\to\infty),
\]
since $P^s(K)$ is compact in the stable subspace.  Hence, Equation \eqref{E:orbitwise-thickness} holds for all sufficiently large $t$, and Theorem~\ref{T:orbitwise-thickness} applies.
\end{proof}

\smallskip
We now present the proof of Theorem~\ref{thm-spirallikeRunge1}.
    
 \begin{proof}[Proof of Theorem~\ref{thm-spirallikeRunge1}]
Let \(K\Subset\Omega\), let \(f\in\mathcal O(\Omega)\), and let \(\mu\) be
a complex Borel measure supported on \(K\) such that
\[
        \int_K p\,d\mu=0
\]
for every holomorphic polynomial \(p\in\mathbb C[z_1,\ldots,z_n]\). It is
enough to show that
\[
        \int_K f\,d\mu=0 .
\]
Let \(X_\tau\) denote the complex-time flow of \(F\). Since \(F\) is
complete, \(X_\tau\) is defined for every \(\tau\in\mathbb C\). Set
\[
        S_K:=\{\tau\in\mathbb C: X_\tau(K)\subset\Omega\}.
\]
Then \(S_K\) is open and contains the real half-line \([0,\infty)\). Define
\[
        H(\tau):=\int_K f(X_\tau(z))\,d\mu(z),
        \qquad \tau\in S_K .
\]
Then \(H\) is holomorphic on \(S_K\).

Choose \(R>0\) such that \(K\Subset B(0,R)\). By
Lemma~\ref{P: global asymptotic property}, there exists \(T>0\) such that
\[
        X_t(\overline{B(0,R)})\subset\Omega
        \qquad \text{for all } t\geq T .
\]
Thus, for every \(t\geq T\), the function
\[
        z\longmapsto f(X_t(z))
\]
is holomorphic on \(B(0,R)\). It can therefore be approximated uniformly
on \(K\) by holomorphic polynomials. Since \(\mu\) annihilates all
holomorphic polynomials, we obtain
\[
        H(t)=0
        \qquad \text{for all } t\geq T .
\]
By the identity theorem, \(H\) vanishes on the connected component of
\(S_K\) containing \([0,\infty)\). In particular,
\[
        \int_K f(z)\,d\mu(z)=H(0)=0 .
\]
Therefore \(\Omega\) is a Runge domain.
\end{proof}

\subsection{Construction of Runge Domains:}
In this subsection we give several constructions of Runge domains.
\begin{enumerate}
    \item \emph{Hyperbolic linear flows.}
Let
\(A\in GL(n,\mathbb C)\) be such that
$ \rl\lambda\neq 0
        \qquad \text{for every } \lambda\in\sigma(A).
$
Then
$
        \mathbb C^n=E^s\oplus E^u.
$
Let \(P^s:\mathbb C^n\to E^s\) denote the projection associated with this
splitting. Define a norm \(q\) on \(E^s\) by
\[
        q(v):=\sup_{t\geq 0}\|e^{tA}v\|,
        \qquad v\in E^s .
\]
This is finite because \(e^{tA}v\to 0\) exponentially on \(E^s\). Moreover,
\[
        q(e^{tA}v)\leq q(v)
        \qquad \text{for all } t\geq 0 .
\]
For \(r>0\), set
\[
        \Omega_r:=\{z\in\mathbb C^n:q(P^s z)<r\}.
\]
Then \(\Omega_r\) is a domain, \(E^u\subset\Omega_r\), and
\[
        e^{tA}(\Omega_r)\subset\Omega_r
        \qquad \text{for all } t\geq 0 .
\]
Furthermore, \(\operatorname{dist}(E^u,\partial\Omega_r)>0\). Indeed, since
\(q\) is equivalent to the Euclidean norm on \(E^s\), there is a constant
\(C>0\) such that $q(v)\leq C\|v\|$
        for all  $v\in E^s$.
If \(z\in\partial\Omega_r\), then \(q(P^s z)=r\), and hence
\[
        \|P^s z\|\geq r/C.
\]
For every \(w\in E^u\),
\[
        \|z-w\|\geq \frac{\|P^s z\|}{\|P^s\|}
        \geq \frac{r}{C\|P^s\|}.
\]
Thus \(\operatorname{dist}(E^u,\partial\Omega_r)>0\), and
Theorem~\ref{T:Runge} implies that \(\Omega_r\) is Runge.

\medskip

\item \emph{Lyapunov-type construction.} Let $V \in \mathfrak{X}_{\mathcal{O}}(\cn)$  be a complete holomorphic vector field such that the origin is the globally attractive equilibrium point of the vector field and $X(t,z)$ be its flow. Consider the function $\varphi: \cn \to 
 \mathbb{R}$ defined by \begin{align}\label{E:improperintegral}
\varphi(z):=\int_{0}^{\infty } \|X(t,z)\|\,dt.
 \end{align} 
Since all the eigenvalues of the matrix $DV(0)$ have strictly negative real parts, hence, by the Hartman-Grobman theorem we get that there exists a neighborhood $U$ of the origin and a  $\smoo^{1}$ smooth diffeomorphism $H:U \to H(U)$ such that $X(t,z)=H\circ e^{tA}\circ H^{-1}(z)$  $t \geq 0$ for all $z \in U$. Here the origin is the globally attractive equilibrium point of the vector field $V$. Therefore, all the eigenvalues of $A$ have strictly negative real parts. Consequently, $H(0)=0$. By our assumption,  there exists $t_{z}>0$ such that $X(t,z) \in U$ for all $t \geq t_{z}$. Therefore, we have 
\begin{align*}
  \int_{0}^{\infty } \|X(t,z)\|\,dt& =\int_{0}^{t_{z} } \|X(t,z)\|\,dt +\int_{t_{z}}^{\infty}\|X(t,z)\|\,dt\\
  &=\int_{0}^{t_{z} } \|X(t,z)\|\,dt+\int_{0}^{\infty}\|X(t+t_{z},z)\|\,dt.\end{align*}
Since $X(t_{z},z) \in U$, we have that 
 $X(t+t_{z},z)=H\circ e^{tA}\circ H^{-1}(X(t_{z},z))$. Here for  all $t \geq 0$ the map $H\circ e^{tA} \circ H^{-1}: U \to \cn$ smooth map. 
Therefore, using  the mean value inequality, for this map we get  that there exists $z_{0} \in U $  such that the following holds for all $t\geq 0$: 
\begin{align*}
    &\|H\circ e^{tA} \circ H^{-1}(X(t_{z},z))-(H\circ e^{tA} \circ H^{-1}(0))\|\\
    &\leq \|D(H\circ e^{tA} \circ H^{-1})(z_{0})\|\|X(t_{z},z)\|\\
    &\leq \|D H(e^{tA}H^{-1}z_{0})\|.\|e^{tA}\|.\|DH^{-1}(z_{0})\|\|X(t_{z},z)\|.
\end{align*}
Clearly, it $\|DH^{-1}(z_{0})\|\|X(t_{z},z)\|$  is bounded above by some positive constant. Since $\{e^{tA}H^{-1}z_{0}: t\geq 0\}$ lies in a compact subset, $\|DH(e^{tA}H^{-1}z_{0})\|$ is also bounded. We now  use  Result~\ref{L:exp} to conclude that the improper integral $$\int_{0}^{\infty}\|X(t+t_{z},z)\|\,dt <\infty. $$  Consequently, we get that  \eqref{E:improperintegral} converges. For every $c>0$ the domain $\Om_{c}:=\{z \in \cn: \varphi(z)<c\}$ are invariant with respect to the flow of the vector field $V.$ Clearly, for $z \in \cn$ and $s\geq 0$ we have $$\varphi(X(s,z))=\int_{0}^{\infty} \|X(t+s,z)\|\,dt=\int_{s}^{\infty}\|X(\tau, z)\|\,d\tau.$$ 
  Hence, $\varphi(X(s,z))+\int_{0}^{s}\|X(\tau, z)\|\,d\tau=\varphi(z)$. Therefore, if $z \in \Om_{c}$ then $\varphi(X(s,z))\leq \varphi(z)<c$.

\medskip

\item \emph{Flow tubes.} 
 Let \(V\) and \(X_t\) be as
above, and fix \(r>0\). Set
\[
        \Omega:=\bigcup_{t\geq 0}X_t(B(0,r)).
\]
Then \(\Omega\) is open, connected, contains the origin, and is positively
flow invariant. Moreover, by global asymptotic stability, there exists
\(T>0\) such that
\[
        X_t(\overline{B(0,r)})\subset B(0,r)
        \qquad \text{for all } t\geq T .
\]
Hence
\[
        \Omega
        =
        \bigcup_{0\leq t\leq T}X_t(B(0,r)),
\]
and in particular \(\Omega\neq\mathbb C^n\). By
Corollary~\ref{thm-spirallikeRunge1}, this domain is Runge.

\end{enumerate}
\medskip

We now give some examples of Runge domains arising from our theorems. Example~\ref{Ex:z1z2} demonstrates a Runge domain whose defining function is not plurisubharmonic.
\begin{example}\label{Ex:z1z2}
Let
\[
        \Omega:=\{(z,w)\in\mathbb C^2:|z|^2-|w|^3<1\}.
\]
Then \(\Omega\) is Runge. For a proof let
\[
        A=
        \begin{pmatrix}
        -1 & 0\\
        0 & 1
        \end{pmatrix}.
\]
The associated flow is
\[
        e^{tA}(z,w)=(e^{-t}z,e^t w).
\]
It is immediate that \(e^{tA}(\Omega)\subset\Omega\) for all \(t\geq 0\).
Moreover,
\[
        E^u=\{(0,w):w\in\mathbb C\}\subset\Omega.
\]
If \((z,w)\in\partial\Omega\), then
\[
        |z|^2-|w|^3=1,
\]
and hence \(|z|\geq 1\). Therefore every point of \(\partial\Omega\) has
distance at least \(1\) from \(E^u\). Thus
\[
        \operatorname{dist}(E^u,\partial\Omega)>0,
\]
and Corollary~\ref{T:Runge} implies that \(\Omega\) is Runge. Note that the function $\rho(z,w)=|z|^2-|w|^3-1$ is not psh on $\CC.$ Indeed, $\mathcal{L}_{\rho}((z,w),(v_{1},v_{2}))=|v_{1}|^2-\frac{9}{4}|w||v_{2}|^2$. Consequently, $\mathcal{L}_{\rho}((z,w),(0,v_{2}))<0$ for all $w\neq 0$ and $v_{2} \neq 0$.
\end{example}

Next we provide a class of Runge domains in $\cplx^n$ generalizing Example~\ref{Ex:z1z2} and having nontrivial centre subspace.
\begin{example}\label{Ex:general-family}
Let \(\Omega\subset\mathbb C^{n_1+n_2+n_3}\) be defined by
\[
\Omega
=
\left\{
(Z_1,Z_2,Z_3):
e^{-\|Z_3\|}
\left(
\sum_{j=1}^{n_1}|z_{1j}|^{m_j}
-
\sum_{j=1}^{n_2}|z_{2j}|^{\ell_j}
\right)
<1
\right\},
\]
where $Z_k=(z_{k1},\ldots,z_{kn_k})$ for $ k=1,2,3,$
and \(m_j,\ell_j\in\mathbb N\). Then \(\Omega\) is Runge.
Indeed, let
\[
        A=
        \begin{pmatrix}
        -I_{n_1} & 0 & 0\\
        0 & I_{n_2} & 0\\
        0 & 0 & iI_{n_3}
        \end{pmatrix}.
\]
Then
\[
        E^u\oplus E^c
        =
        \{(0,X,Y):X\in\mathbb C^{n_2},\ Y\in\mathbb C^{n_3}\}.
\]
Clearly, \(E^u\oplus E^c\subset\Omega\). If
\((Z_1,Z_2,Z_3)\in\Omega\), then
\[
        e^{tA}(Z_1,Z_2,Z_3)
        =
        (e^{-t}Z_1,e^tZ_2,e^{it}Z_3).
\]
Since
\[
\begin{aligned}
& e^{-\|e^{it}Z_3\|}
\left(
\sum_{j=1}^{n_1}|e^{-t}z_{1j}|^{m_j}
-
\sum_{j=1}^{n_2}|e^t z_{2j}|^{\ell_j}
\right)  \\
&\qquad \leq
e^{-\|Z_3\|}
\left(
\sum_{j=1}^{n_1}|z_{1j}|^{m_j}
-
\sum_{j=1}^{n_2}|z_{2j}|^{\ell_j}
\right)
<1,
\end{aligned}
\]
we have \(e^{tA}(\Omega)\subset\Omega\) for all \(t\geq 0\).

It remains to verify the distance condition. Suppose, to the contrary, that
\[
        \operatorname{dist}(E^u\oplus E^c,\partial\Omega)=0.
\]
Then there exist
\[
        (0,Z_2^\nu,Z_3^\nu)\in E^u\oplus E^c
\]
and
\[
        (W_1^\nu,W_2^\nu,W_3^\nu)\in\partial\Omega
\]
such that
\[
        \|(0,Z_2^\nu,Z_3^\nu)-(W_1^\nu,W_2^\nu,W_3^\nu)\|\to 0.
\]
In particular, \(W_1^\nu\to 0\). Since
\((W_1^\nu,W_2^\nu,W_3^\nu)\in\partial\Omega\), we have
\[
e^{-\|W_3^\nu\|}
\left(
\sum_{j=1}^{n_1}|w_{1j}^\nu|^{m_j}
-
\sum_{j=1}^{n_2}|w_{2j}^\nu|^{\ell_j}
\right)
=1.
\]
However, the left-hand side is bounded above by
\[
        \sum_{j=1}^{n_1}|w_{1j}^\nu|^{m_j},
\]
which tends to \(0\). This contradiction proves the distance condition.
Therefore \(\Omega\) is Runge by Theorem~\ref{T:Runge}.
\end{example}

The next example demonstrates that Theorem~\ref{T:orbitwise-thickness} is stronger than Corollary~\ref{T:Runge}.
\begin{example}
\label{Ex:orbitwise-thickness-not-starlike}
Let
\[
        A=
        \begin{pmatrix}
        -2 & 0\\
        0 & 1
        \end{pmatrix}.
\]
Then
\[
        E^s=\mathbb C\times\{0\},
        \qquad
        E^u=\{0\}\times\mathbb C,
        \qquad
        E^c=\{0\}.
\]
Define a continuous increasing function \(m:[0,\infty)\to(0,\infty)\)
by
\[
        m(r)=
        \begin{cases}
        1, & 0\leq r\leq 1,\\[4pt]
        1+9(r-1), & 1<r<2,\\[4pt]
        10, & r\geq 2.
        \end{cases}
\]
Set
\[
        \rho(r):=\frac{m(r)}{1+r},
        \qquad r\geq 0,
\]
and define
\[
        \Omega
        :=
        \left\{
        (z,w)\in\mathbb C^2:
        |z|<\rho(|w|)
        \right\}.
\]
We will show that \(\Omega\) satisfies conditions of Theorem~\ref{T:orbitwise-thickness}.
Clearly, \(E^{cu}\subset\Omega\).
The flow of \(A\) is
$(z,w)\mapsto (e^{-2t}z,e^t w).$
We claim that \(\Omega\) is positive-time invariant under this flow. Let
\((z,w)\in\Omega\), and put \(r=|w|\). Since \(m\) is increasing, we have
\[
        m(e^t r)\geq m(r).
\]
Also,
\[
        1+e^t r\leq e^t(1+r).
\]
Hence
\[
        \rho(e^t r)
        =
        \frac{m(e^t r)}{1+e^t r}
        \geq
        \frac{m(r)}{e^t(1+r)}
        =
        e^{-t}\rho(r).
\]
Therefore
\[
        |e^{-2t}z|
        <
        e^{-2t}\rho(|w|)
        \leq
        e^{-t}\rho(|w|)
        \leq
        \rho(e^t|w|).
\]
Thus
\[
        e^{tA}(z,w)\in\Omega
        \qquad \text{for all } t\geq 0,
\]
and so
\[
        e^{tA}(\Omega)\subset\Omega
        \qquad \text{for all } t\geq 0.
\]

The projections associated
with $ \mathbb C^2=E^s\oplus E^{cu}$ are the following:

\[
        P^s(z,w)=(z,0),
        \qquad
        P^{cu}(z,w)=(0,w).
\]
Let \(K\subset\mathbb C^2\) be compact,   $ M_K:=\sup_{(z,w)\in K}|z|$ and $N_K:=\sup_{(z,w)\in K}|w|.$ Then
\[
        \sup_{(z,w)\in K}\|e^{tA}P^s(z,w)\|
        =
        M_K e^{-2t}.
        \tag{1}
\]
\noindent
We need a lower bound for the distance from \(e^{tA}P^{cu}(K)\) to
\(\partial\Omega\). Since \(m(r)\geq 1\) for all \(r\geq 0\), we have
\[
        \rho(r)\geq \frac{1}{1+r}.
        \tag{2}
\]
The boundary of \(\Omega\) is
\[
        \partial\Omega
        =
        \{(\zeta,\omega)\in\mathbb C^2:|\zeta|=\rho(|\omega|)\}.
\]
We claim that there exists a constant \(c>0\) such that
\[
        \operatorname{dist}\bigl((0,W),\partial\Omega\bigr)
        \geq
        \frac{c}{1+|W|}
        \qquad \text{for all } W\in\mathbb C.
        \tag{3}
\]
Indeed, let \((\zeta,\omega)\in\partial\Omega\). Then
\[
        |\zeta|=\rho(|\omega|)\geq \frac{1}{1+|\omega|}.
\]
If
\[
        |\omega-W|\geq \frac{1}{2(1+|W|)},
\]
then
\[
        \|(\zeta,\omega)-(0,W)\|
        \geq
        |\omega-W|
        \geq
        \frac{1}{2(1+|W|)}.
\]
On the other hand, if
\[
        |\omega-W|<\frac{1}{2(1+|W|)},
\]
then in particular \(|\omega-W|<1/2\), and hence
\[
        1+|\omega|
        \leq
        1+|W|+|\omega-W|
        \leq
        \frac32(1+|W|).
\]
Therefore
\[
        |\zeta|
        \geq
        \frac{1}{1+|\omega|}
        \geq
        \frac{2}{3}\frac{1}{1+|W|}.
\]
Combining the two cases, we obtain \((3)\), for example with
\(c=1/2\).

Since $e^{tA}P^{cu}(z,w)=(0,e^t w)$,
it follows from \((3)\) that
\[
\begin{aligned}
        \operatorname{dist}
        \bigl(e^{tA}P^{cu}(K),\partial\Omega\bigr)
        &=
        \inf_{(z,w)\in K}
        \operatorname{dist}
        \bigl((0,e^t w),\partial\Omega\bigr)       \\
        &\geq
        \inf_{(z,w)\in K}
        \frac{c}{1+e^t|w|}                         
        \geq
        \frac{c}{1+e^t N_K}.
\end{aligned}
        \tag{4}
\]
Since
\[
        M_K e^{-2t}\bigl(1+e^t N_K\bigr)
        =
        M_K e^{-2t}+M_KN_K e^{-t}
        \longrightarrow 0
\]
as \(t\to\infty\), there exists \(T_K>0\) such that
\[
        M_K e^{-2t}
        <
        \frac{c}{1+e^t N_K}
        \qquad \text{for all } t\geq T_K.
\]
Therefore
\[
        \operatorname{dist}
        \bigl(e^{tA}P^{cu}(K),\partial\Omega\bigr)
        >
        \sup_{(z,w)\in K}\|e^{tA}P^s(z,w)\|
        \qquad \text{for all } t\geq T_K.
\]
Hence,
by Theorem~\ref{T:orbitwise-thickness}, \(\Omega\) is Runge.
This example is not covered by Corollary~\ref{T:Runge}. Indeed, for \(R\geq 2\),
\[
        \rho(R)=\frac{10}{1+R},
\]
and the point
$(\rho(R),R)\in \partial\Omega$. Since \((0,R)\in E^u\), we get that
\[
        \operatorname{dist}\bigl((0,R),\partial\Omega\bigr)
        \leq
        \rho(R)
        =
        \frac{10}{1+R}
        \longrightarrow 0
        \quad \text{as } R\to\infty.
\]
Hence, $\operatorname{dist}(E^u,\partial\Omega)=0.$
\end{example}

 The following example shows that the distance condition in Theorem~\ref{T:orbitwise-thickness} can not be removed.

\begin{example}\label{E:Example0}
Let $\Omega:=\mathbb C^2\setminus\{(z,w)\in\mathbb C^2:zw=1\}$.
Then \(\Omega\) is not Runge. Indeed, let
$K:=\{(z,w)\in\mathbb C^2: |z|=0.6,\ w=2\}$.
The polynomial hull of \(K\) is $\widehat K
        =
        \{(z,w)\in\mathbb C^2: |z|\leq 0.6,\ w=2\}.$
But
$(1/2,2)\in \widehat K$ and
        $(1/2,2)\notin\Omega.$
Thus \(\Omega\) cannot be Runge.
On the other hand, \(\Omega\) is positive-time flow invariant with respect to
\[
        A=
        \begin{pmatrix}
        1 & 0\\
        0 & -1
        \end{pmatrix},
\]
because the term \(zw\) is preserved by the flow
$(z,w)\mapsto (e^t z,e^{-t}w).$
We also get that
\[
        E^{cu}=E^u=\{(z,0):z\in\mathbb C\}\subset\Omega.
\] 
Clearly, 
\[\operatorname{dist}
        \bigl(e^{tA}P^{cu}(\cplx^2),\partial\Omega\bigr)=0\quad \text{and}\quad \sup_{z\in B(0,R)}\|e^{tA}P^s z\|=R.
        \]
        Thus, \(\Omega\) satisfies all the hypotheses of
Theorem~\ref{T:orbitwise-thickness} except the distance condition. This shows that the
assumption
\[
       \operatorname{dist}\bigl(e^{tA}P^{cu}(K),\partial\Omega\bigr) > \sup_{z\in K}\|e^{tA}P^s z\|
\]
for every compacts in $\cplx^2$ cannot be omitted.
\end{example}

\section{Linearization  of the  Vector field}

In this section, we present the proof of Theorem~\ref{T:integrability-linearization}.

\begin{proof}[Proof of Theorem~\ref{T:integrability-linearization}] We break the proof into four steps.
\smallskip

\noindent  \textbf{Step I: Construction of the limiting map}
 \smallskip
 
\noindent Let $X(t,z)$ be the flow of the vector field $V$. Hence, 
\[
\frac{dX(t,z)}{dt}=V(X(t,z)) \forall t \geq 0, \forall z \in \cn. 
\]
 We  now deduce: 
    \begin{align}\label{E:E1}
    & e^{-tA}\frac{dX(t,z)}{dt}=e^{-tA}A(X(t,z))+e^{-tA}R(X(t,z))\notag\\ 
\impl & \frac{d(e^{-tA}X(t,z))}{dt}=e^{-tA}R(X(t,z))\notag\\ 
\impl e^{-tA}X(t,z)&=z+\int_{0}^{t}e^{-sA}R(X(s,z))\,ds.  
\end{align}
By the assumed locally uniform convergence of the improper integral,
the right-hand side converges locally uniformly on $\mathbb C^n$ as
$t\to\infty$. Thus
\[
        F(z):=
        z+
        \int_0^\infty e^{-sA}R(X_s(z))\,ds
\]
is well defined, and
\[
        e^{-tA}X_t(z)\longrightarrow F(z)
\]
locally uniformly on \(\mathbb C^n\).

For each $t\geq 0$, define
\[
        f_t:=e^{-tA}\circ X_t .
\]
Since $X_t$ is an automorphism of $\mathbb C^n$ with inverse
$X_{-t}$, and since $e^{-tA}$ is a linear automorphism, each
$f_t\in \operatorname{Aut}(\mathbb C^n)$. Since
$f_t\to F$ locally uniformly and each $f_t$ is holomorphic, the
Weierstrass theorem implies that $F$ is holomorphic.

\medskip

\noindent \textbf{Step II: Injectivity of the limiting map.} 
\smallskip

\noindent We have $f_t:=e^{-tA}\circ X_{t} \in \text{Aut}(\cn)$ and $D(e^{-tA}X_{t})(0)=e^{-tA}DX_{t}(0)=e^{-tA}e^{tA}=I_{n} $ for all $t\geq 0$. Since $ Df_{t}(0) \to DF(0)$ as $t \to \infty$, hence, we obtain that $DF(0)=I_{n}$. Since $\text{det}(D(e^{-tA}X_{t})(z)) \neq 0$ for all $z \in \cn$, therefore,  by Hurwitz theorem in several variables, we conclude that $\text{det}(DF(z))$ is either non-zero on $\Om$ or identically zero on $\Om$. But, $\text{det}(DF(0))=1$ implies that $\det(DF(z)) \neq 0$ for all $z \in \Om$. Therefore, by invoking inverse function theorem, we obtain that $F$ is a local biholomorphism.
We need to show that $F$ is injective. Suppose not. Then, there exist two distinct points $z_{1}, z_{2}$ in $\Om$ such that $F(z_{1})=F(z_{2})$. Since $F$ is locally injective, hence, there exists $r>0$ such that $F$ is injective on $B(z_1,r)$. Shrinking $r>0$, if needed, we assume that $z_{2} \notin \overline{B(z_1,r)}$. Define $\widetilde{F}_{t}:=f_{t}-f_{t}(z_{2})$
 and $\widetilde{F}:=F-F(z_{2})$. Clearly, $\widetilde{F}^{-1}(0) \cap \partial B(z_1,r) =\emptyset$. 
 Choose $\alpha>0$ such that $\inf_{\bdy B(z_1,r)}|\widetilde{F}|>\alpha$.
 
 Now consider $g_t:=\widetilde{F}_{t}-\widetilde{F}$. We have
 $$
     \sup_{\bdy B(z_1,r)}|g_t(z)|\leq \sup_{\bdy B(z_1,r)}\left ( |f_t(z)-F(z)|+|f_t(z_2)-F(z_2)|\right). 
 $$
 Hence, there exists $N_0\in\natu$ such that $\sup_{\bdy B(z_1,r)}|g_{t}(z)|\leq \al$ for all $t>N_0$. Hence, we obtain that 
 $$
 \sup_{\bdy B(z_1,r)}|g_t(z)|\leq \al < |\widetilde{F}(z)| \;\;\forall z\in\bdy B(z_1,r).
 $$
Therefore, we infer from Result~\ref{R:Lloyd}, that $\widetilde{F}$ and $\widetilde{F_t}$ has same number of zeros on $B(z_1,r)$ for all $t>N_0$. This contradicts the fact that $f_{t}$ are injective. Therefore, $F:\Om \to \cn$ is an injective holomorphic map.
\\
\medskip
\\
\textbf{Step III: The limiting map conjugates the nonlinear flow $X_{t}(z)$ to the linear flow $e^{tA}(z)$.}
\smallskip

\noindent
We have that for every compact set $K \subset \cn$ the map $e^{-tA}X_{t} \to F(z)$ uniformly over $K$ as $t \to \infty$. Let $s > 0$
 and $\eps>0$ be any given real number. Let $r>0$ such that $(k_{+}(A)+r)<0$. Then from the relation given in  Result~\ref{L:exp}, we get that there exists $m(r)>0$ such that 
 \begin{align}\label{E:le0}
     \|e^{tA}z\| &\leq m(r)e^{(k_{+}(A)+r)t}\|z\|~~~\forall t \geq 0.
 \end{align}
 Now choose $T_{s}>0$ such that the following holds for all $z \in K$ for all $t\geq T_{s}$:
 \begin{align}
     \|F(X_{s}(z))-e^{-tA}\circ X_{t}(X_{s}(z))\|&<\frac{\eps}{2}\label{E:le1}\\
    \|e^{-tA}X_{t}(z)-F(z)\|&<\frac{\eps}{2m(r)}\label{E:le2}. 
 \end{align}
 We now have the following 
 \begin{align}
    & \|F(X_{s}(z))-e^{sA}F(z)\|\notag\\
    &\leq  \|F(X_{s}(z))-e^{-T_{s}A}X_{T_{s}}(X_{s}(z))\|+\|e^{-T_{s}A}X_{T_{s}}(X_{s}(z)-e^{sA}F(z)\|\notag\\
     &\leq \|F(X_{s}(z))-e^{-T_{s}A}X_{T_{s}}(X_{s}(z))\|+\|e^{sA}\big(e^{-(T_{s}+s)A}X_{T_{s}}(X_{s}(z)-F(z)\big)\notag\|,
     \end{align}
     using \eqref{E:le0} we obtain the following from above equation
     \begin{align}\label{E:le3}
     \|F(X_{s}(z))-e^{sA}F(z)\| &\leq \|F(X_{s}(z))-e^{-T_{s}A}X_{T_{s}}(X_{s}(z))\|\notag\\
 &+m(r)e^{s((k_{+}(A)+r))}\|\big(e^{-(T_{s}+s)A}X_{s+T_{s}}(z)-F(z)\big)\|.
\end{align}
From \eqref{E:le1},  \eqref{E:le2}, \eqref{E:le3} we obtain that 
\begin{align}
    \|F(X_{s}(z))-e^{sA}F(z)\|&< \eps.
\end{align}
Since $\eps>0$ is an arbitrary real number, we conclude that $F(X_{s}(z))=e^{sA}F(z)$. Consequently, we get that $F\circ X_{s}\circ F^{-1}(z)=e^{sA}(z)$ for all $s \geq 0$ and $z \in \cn$.
\medskip

\noindent
{\textbf{Step IV:  Surjectivity of $F$.}} 
\smallskip

\noindent It is enough that $F$ is onto. It follows from Step III that  
\[
F(z)=e^{-sA}\circ F \circ X_{s}(z)\;\; \forall z \in \cn\;\; \forall s\geq 0.
\]
Let $w \in \cn$ and $s>0$ large enough such that $e^{sA}w \in F(\cn)$. Then 
\[
F(X_{-s}(F^{-1}(e^{sA}w)))=w.
\]
Therefore, $F$ is surjective.

We obtain that the limiting map $F\in\operatorname{Aut}(\cplx^n)$.
The relation
\[
        F\circ X_s=e^{sA}\circ F
\]
has already been proved for \(s\geq 0\). Since both \(X_s\) and
\(e^{sA}\) are one-parameter groups, the same identity holds for all
\(s\in\mathbb R\). Equivalently,
\[
        F\circ X_s\circ F^{-1}=e^{sA}
        \qquad \text{for all } s\in\mathbb R.
\]

If the vector field is complete in complex time, then for each fixed
\(z\in\mathbb C^n\), the maps
\[
        \tau\longmapsto F(X_\tau(z))
        \qquad \text{and} \qquad
        \tau\longmapsto e^{\tau A}F(z)
\]
are entire maps from \(\mathbb C\) to \(\mathbb C^n\). They agree for
all real \(\tau\), and hence, by the identity theorem, they agree for
all \(\tau\in\mathbb C\). Therefore, we conclude that $F \in {\rm Aut}(\cn)$ linearizes the action of the group  $(\cplx, +)$  on $\cn$ defined by the flow of the complete vector field $V$.

  \end{proof}

\begin{proof}[Proof of Corollary~\ref{L:composconver}]

 By assumption, the vector field has the form $V(z)=Az+R(z)$, where $R(z)=O(|z|^{m})$.
 Let $\delta>0$ small enough such that $|R(w)|\leq C|w|^{m}$ for all $w\in B(0, \delta)$.
 Since $X(t,z) \to 0$ as $t \to \infty $ uniformly over every compact subsets of $\cn$, for a given compact subset $K \subset \cn$ there exists a real number $m_{K}>0$ such that $X(t,z) \in B(0, \delta)$ for all $z \in K$ for all $t>m_{K}$. Let $K \subset \cn$ be a compact set containing the point $z$.
Using Result~\ref{L:exp} and the assumption on the vector field we obtain that
\begin{align}\label{E:E2}
\bigg|\int_{m_{K}}^{\infty}e^{-sA}R(X(s,z))\,ds\bigg|&\leq C\int_{m_{K}}^{\infty}e^{-s(k_{-}(A)-\eps)}|X(s,z)|^{m}\,ds.  
 \end{align}
   Since all the eigenvalues of the matrix $A$ have strictly negative real parts, hence, by Hartman-Grobman theorem, we get that there exists a neighborhood $U$ of the origin and a  $\smoo^{1}$ smooth map $H:U \to H(U)$ such that 
\[
X(t,z)=H\circ e^{tA}\circ H^{-1}(z)\;\;\forall t \geq 0, \forall z \in U.
\]
Here the origin is the globally attractive equilibrium point of the vector field $V$.  Therefore, the map $H$ can be extended to the whole of $\cn$ in the following way: Choose a large  $s>0$ such that $X(s,z) \in U$ and $e^{sA}z\in U$. We define 
$\widetilde{H}:\cn \to \cn$  by $\widetilde{H}(z):=e^{-sA}\circ H \circ X_{s}(z)$. 
It is easy to see that the map $\widetilde{H}$ is independent of $s$. Let $s_{1}, s_{2}>0$ such that $X_{s_{1}}(z), X_{s_{2}}(z) \in U$. Assume that $s_{2}>s_{1}$. We get that 
\begin{align*}
    e^{-s_{2}A}H(X_{s_{2}}(z))&=e^{-s_{2}A}H(X_{s_{2}-s_{1}}(X_{s_{1}}(z)))\\
\end{align*}
Since $X_{s_{1}}(z) \in U$ and $H$ conjugate the flow to its linearization on $U$, hence, from the above equation we get that
\begin{align*}
    e^{-s_{2}A}H(X_{s_{2}}(z))&=e^{-s_{2}A}e^{(s_{2}-s_{1})A}H(X_{s_{1}}(z))=e^{-s_{1}A}H(X_{s_{1}}(z)).
\end{align*}
We now obtain that 
\begin{align}
    \widetilde{H}\circ X_{t}\circ \widetilde{H}^{-1}(z)&=e^{-sA}\circ H \circ X_{s}(X_{t}\circ X_{-s} \circ \widetilde{H}^{-1}(e^{sA}(z))\notag\\
    &=e^{-sA}\circ H \circ X_{t}\circ H^{-1}\circ(e^{sA}z)\notag\\
 &=e^{tA}z. \label{E:E3}
\end{align}
Therefore, from \eqref{E:E2} and \eqref{E:E3} we obtain that 
\begin{align}
\bigg|\int_{m_{K}}^{\infty}e^{-sA}R(X(s,z))\,ds\bigg|&\leq C\int_{m_{K}}^{\infty}e^{-s(k_{-}(A)-\eps)}|\widetilde{H}^{-1}e^{tA}\widetilde{H}(z)|^{m}\,ds, \notag
\intertext{since $\widetilde{H}$ is $\smoo^{1}$-smooth, hence, using mean value   inequality we obtain that}
\bigg|\int_{m_{K}}^{\infty}e^{-sA}R(X(s,z))\,ds\bigg|&\leq\widetilde{C}\int_{m_{K}}^{\infty}e^{-s(k_{-}(A)-\eps)}e^{ms(K_{+}(A)+\eps)}\,ds\notag\\
&=\widetilde{C}\int_{m_{K}}^{\infty}e^{s(-k_{-}(A)+m.k_{+}(A)+(m-1)\eps)}\,ds. \label{eq-estimate}
\end{align}
By our assumption, there exists $\eps>0$ such that 
\[
s(-k_{-}(A)+m.k_{+}(A)+(m-1)\eps)<0.
\]
Hence, the right hand side of \eqref{eq-estimate} converges.
Using Theorem~\ref{T:integrability-linearization} we get the linearization.
\end{proof}
 
We now give a couple of examples where the linearizing map can be explicitly computed.

 \begin{example}
    Consider a holomorphic  vector field on $\CC$ defined by $V(z_{1}, z_{2})=(-z_{1},-2z_{2}+z_{1}^{4})$. The flow is $X(t,(z_{1},z_{2}))=(e^{-t}z_{1}, e^{-2t}(z_{2}+\frac{z_{1}^{4}}{2}(1-e^{-2t})))$. 
Here   $V(z)=Az+O(|z|^{4})$, where $A=\begin{bmatrix}
    -1 & 0\\
    0 & -2
\end{bmatrix}$ and 
\[
        4\max\{-1,-2\}
        =
        -4
        <
        -2
        =
        \min\{-1,-2\}.
\]
Thus the spectral-gap condition in Corollary~\ref{L:composconver} is satisfied.
Although the spectrum has a resonance, by Corollary~\ref{L:composconver}, there exists an automorphism
\(F\in\operatorname{Aut}(\mathbb C^2)\) such that
\[
        F\circ X_t\circ F^{-1}=e^{tA}.
\]
The automorphism is
\[
  F(z_1,z_2)=\left(z_1,z_2+\frac{z_1^4}{2}\right).
\]

This conjugacy allows us to study the nonlinear flow \(X_t\) by passing
to the coordinates
\[
        w_1=z_1,
        \qquad
        w_2=z_2+\frac{z_1^4}{2},
\]
in which the flow becomes
\[
        w_1(t)=e^{-t}w_1,
        \qquad
        w_2(t)=e^{-2t}w_2.
\]
As a first consequence, we obtain invariant curves for the nonlinear
flow. For the linear flow
$t\mapsto(e^{-t}w_1,e^{-2t}w_2)$,
the meromorphic function
$J(w_1,w_2):=w_2/w_1^2$
is a first integral on the set \(\{w_1\neq 0\}\).
By Corollary~\ref{coro-transfer}, we obtain a first integral for the nonlinear
flow:
\[
        I(z_1,z_2)
        =
        J(F(z_1,z_2))
        =
        \frac{z_2+\frac{z_1^4}{2}}{z_1^2}.
\]
Consequently, for each $c\in\cplx$ the curves
\[
        \Gamma_c
        :=
        \left\{
        (z_1,z_2)\in\mathbb C^2:
        z_2+\frac{z_1^4}{2}=c z_1^2
        \right\},
        \]
are invariant under the nonlinear flow.  
\end{example}

\begin{example}
Consider
$V(z_1,z_2)=(-z_1,-2z_2+z_1z_2)$
on \(\mathbb C^2\). Then $V(z)=Az+R(z)$, where
\[
        A=
        \begin{pmatrix}
        -1 & 0\\
        0 & -2
        \end{pmatrix},
        \qquad
        R(z)=(0,z_1z_2).
\]
Thus, \(R(z)=O(\|z\|^2)\), but
\[
        m\max_{\lambda\in\sigma(A)}\operatorname{Re}\lambda
        =
        2(-1)
        =
        -2 =
        \min_{\lambda\in\sigma(A)}\operatorname{Re}\lambda
\]
Thus, the strict spectral-gap condition
fails.
The flow is
\[
        X_t(z_1,z_2)
        =
        \left(
        e^{-t}z_1,\,
        z_2\exp(-2t+z_1(1-e^{-t}))
        \right).
\]
Therefore,
\[
        e^{-sA}R(X_s(z))
        =
        \left(
        0,\,
        z_1z_2
        \exp(-s+z_1(1-e^{-s}))
        \right),
\]
which is integrable on compact subsets of \(\mathbb C^2\).
Hence, Theorem~\ref{T:integrability-linearization} applies. The limiting automorphism is
\[
        F(z_1,z_2)=\lim_{t\to\infty}e^{-tA}X_t(z_1,z_2)
        =
        (z_1,e^{z_1}z_2),
\]
and 
$F\circ X_t\circ F^{-1}=e^{tA}$.
\end{example}
		
\noindent {\bf Acknowledgements.} 
Sanjoy Chatterjee is supported by the Institute Postdoctoral Fellowship of the Indian Institute of Technology, Kanpur.   Sushil Gorai is partially supported by a Core Research Grant (CRG/2022/003560) of ANRF (erstwhile SERB), Dept. of Science and Technology, Govt. of India.



\end{document}